\tikzset{>=latex}
\newcommand{\A}{\mathbb{A}}
\theoremstyle{plain} \newtheorem{thm}{Theorem}[section]
\newtheorem{prop}[thm]{Proposition}
\newtheorem{lem}[thm]{Lemma}
\newtheorem{cor}[thm]{Corollary}
\theoremstyle{definition} \newtheorem{defn}[thm]{Definition}
\theoremstyle{remark} 
\theoremstyle{plain} \newtheorem{claim}{Claim}
\theoremstyle{plain} 
\newenvironment{claimproof} {
  \begin{proof}[Proof of claim]
  
  } {
  \end{proof}
  }
\DeclareMathOperator{\Con}{Con}
\numberwithin{equation}{section}  % number equations within sections
\renewcommand{\phi}{\varphi}
\renewcommand{\epsilon}{\varepsilon}
\theoremstyle{definition} 
\theoremstyle{remark} 
\DeclareMathOperator{\cube}{cube}
\DeclareMathOperator{\faces}{Faces}
\DeclareMathOperator{\glue}{Glue}
\DeclareMathOperator{\tc}{TC}
\DeclareMathOperator{\cut}{Cut}
\DeclareMathOperator{\lines}{Lines}
\DeclareMathOperator{\squares}{Squares}
\DeclareMathOperator{\rect}{Rect}
\DeclareMathOperator{\refl}{Refl}
\DeclareMathOperator{\sym}{Sym}
\DeclareMathOperator{\rot}{rot}
\DeclareMathOperator{\com}{Com}
\DeclareMathOperator{\fin}{fin}
\DeclareMathOperator{\clo}{Clo}
\def\nat{\mathbb{N}}
\def\A{\mathbb{A}}
\def\var{\mathcal{V}}
\def\Union{\bigcup}
\def\union{\cup}
\def\finsub{\subseteq_{\fin}}
\tikzset{myStyle/.style={baseline=(center.base), font=\small,
    every node/.style={inner sep=0.25em} }}
\NewDocumentCommand{\LinePic}{ O{} O{} O{1} }{ % {{{
  \begin{tikzpicture}[myStyle, scale=#3*1 ]
    \node (center) at (0,0.5) {\phantom{$\cdot$}}; % black magic
    \path (0,0)  node (s) {$#1$}
        ++(0,1)  node (n) {$#2$};
    \draw (n) -- (s);
  \end{tikzpicture}
}  % }}}
\newcommand{\SquareUnwrapped}[4]{ % {{{
  \node (center) at (0.5,-0.5) {\phantom{$\cdot$}}; % black magic
  \path (0,0)  node (nw) {$#2$}
      ++(1,0)  node (ne) {$#4$}
      ++(0,-1) node (se) {$#3$}
      ++(-1,0) node (sw) {$#1$};
  \draw (nw) -- (ne) -- (se) -- (sw) -- (nw);
}  % }}}
\NewDocumentCommand{\SquareXY}{ O{} O{} O{} O{} O{1} O{1} }{ % {{{
  \begin{tikzpicture}[myStyle, xscale=#5*1, yscale=#6*1 ]
    \SquareUnwrapped{#1}{#2}{#3}{#4}
  \end{tikzpicture}
}  % }}}
\NewDocumentCommand{\Square}{ O{} O{} O{} O{} O{1} }{ % {{{
  \SquareXY[#1][#2][#3][#4][#5][#5]
}  % }}}
\NewDocumentCommand{\SquareAxes}{ O{} O{} O{1} O{0} O{1} }{  % {{{
  \begin{tikzpicture}[myStyle, scale=#3*0.8] % scale to align with squares
    \node (center) at (0.5,0.5) {\phantom{$\cdot$}}; % black magic
    \draw (0,0) -- node[above]{$#1$} (1,0) node[right]{$#4$}
      (0,0) -- node[left]{$#2$} (0,1) node[above]{$#5$};
  \end{tikzpicture}
}  % }}}
\NewDocumentCommand{\CubeAxes}{ O{} O{} O{} O{} O{} O{} O{} }{  % {{{
  \begin{tikzpicture}[myStyle, scale=#4*0.85] % scale to align with cubes
    \node (center) at (0.5,0.75) {\phantom{$\cdot$}}; % black magic
    \draw (0,0) -- node[above]{$#1$} (1,0) node[right]{$#5$}
      (0,0) -- node[left]{$#2$} (0,1) node[above]{$#6$}
      (0,0) -- node[below left=-0.25em]{$#3$} (0.5,-0.5) node[below right=-0.2em]{$#7$};
  \end{tikzpicture}
}  % }}}
\newcommand{\CubeNodes}[8]{  % {{{
  \node at (0.75,-0.75) (center) {\phantom{$\cdot$}}; % black magic
  \path (0,0)  node (back_nw)      {$#2$}
      ++(1,0)  node (back_ne)      {$#4$}
      ++(0,-1) node (back_se)      {$#3$}
      ++(-1,0) node (back_sw)      {$#1$}
        (0.5,-0.5) node (front_nw) {$#6$}
      ++(1,0)  node (front_ne)     {$#8$}
      ++(0,-1) node (front_se)     {$#7$}
      ++(-1,0) node (front_sw)     {$#5$};
}  % }}}
\newcommand{\CubeUnwrapped}[8]{ % {{{
  \CubeNodes{#1}{#2}{#3}{#4}{#5}{#6}{#7}{#8}
  \draw (back_nw) -- (back_ne) -- (back_se) -- (back_sw) -- (back_nw)
    (front_nw) -- (front_ne) -- (front_se) -- (front_sw) -- (front_nw)
    (back_nw) -- (front_nw)
    (back_ne) -- (front_ne)
    (back_se) -- (front_se)
    (back_sw) -- (front_sw);
}  % }}}
\newcommand{\CubeDUnwrapped}[8]{ % {{{
  \CubeNodes{#1}{#2}{#3}{#4}{#5}{#6}{#7}{#8}
  \draw (front_nw) -- (front_ne) -- (front_se) -- (front_sw) -- (front_nw)
    (back_nw) -- (back_ne) (back_sw) -- (back_nw)
    (back_nw) -- (front_nw)
    (back_ne) -- (front_ne)
    (back_sw) -- (front_sw);
  \draw[densely dotted] (back_ne) -- (back_se) -- (back_sw)
    (back_se) -- (front_se);
}  % }}}
\NewDocumentCommand{\Cube}{ O{} O{} O{} O{} O{} O{} O{} O{} O{1} }{  % {{{
  \begin{tikzpicture}[myStyle, scale=#9*1 ]
    \CubeUnwrapped{#1}{#2}{#3}{#4}{#5}{#6}{#7}{#8}
  \end{tikzpicture}
}  % }}}
\NewDocumentCommand{\CubeDeep}{ O{} O{} O{} O{} O{} O{} O{} O{} O{1}  }{  % {{{
  \begin{tikzpicture}[myStyle, xscale=#9*1, yscale=1.5 ]
    \CubeUnwrapped{#1}{#2}{#3}{#4}{#5}{#6}{#7}{#8}
  \end{tikzpicture}
}  % }}}
\NewDocumentCommand{\CubeD}{ O{} O{} O{} O{} O{} O{} O{} O{} O{1} }{  % {{{
  \begin{tikzpicture}[myStyle, scale=#9*1 ]
    \CubeDUnwrapped{#1}{#2}{#3}{#4}{#5}{#6}{#7}{#8}
  \end{tikzpicture}
}  % }}}
\NewDocumentCommand{\DeltaZeroCubeD}{ O{} O{} O{} O{} O{} O{} O{} O{} O{1} }{  % {{{
  \begin{tikzpicture}[myStyle, scale=#9*1]
    \CubeDUnwrapped{#1}{#2}{#3}{#4}{#5}{#6}{#7}{#8}
    \draw (back_sw)  to[out=30,in=180-30] (back_se)
      (back_nw)  to[out=30,in=180-30] node[auto]{$\delta$} (back_ne)
      (front_sw) to[out=30,in=180-30] (front_se);
    \draw[dashed] (front_nw) to[out=30,in=180-30] (front_ne);
  \end{tikzpicture}
}  % }}}
\NewDocumentCommand{\DeltaOneCubeD}{ O{} O{} O{} O{} O{} O{} O{} O{} O{1} }{  % {{{
  \begin{tikzpicture}[myStyle, scale=#9*1]
    \CubeDUnwrapped{#1}{#2}{#3}{#4}{#5}{#6}{#7}{#8}
    \draw (back_sw)  to[out=120,in=240]node[left]{$\delta$} (back_nw)
      (back_se)  to[out=120,in=240]  (back_ne)
      (front_sw) to[out=120,in=240] (front_nw);
  \end{tikzpicture}
}  % }}}
\NewDocumentCommand{\DeltaTwoCubeD}{ O{} O{} O{} O{} O{} O{} O{} O{} O{1} }{  % {{{
  \begin{tikzpicture}[myStyle, scale=#9*1]
    \CubeDUnwrapped{#1}{#2}{#3}{#4}{#5}{#6}{#7}{#8}
    \draw (back_sw) to[out=180+30,in=180] node[auto,swap]{$\delta$} (front_sw)
      (back_se) to[out=0,in=30] (front_se)
      (back_nw) to[out=180+30,in=180] (front_nw);
    \draw[dashed] (back_ne) to[out=0,in=30] (front_ne);
  \end{tikzpicture}
}  % }}}
\begin{document}
% title and abstract {{{1
\title{Higher Kiss terms}
\author{ Andrew Moorhead}

\address[Andrew Moorhead]{
  University of Kansas
  Dept.\ of Electrical Engineering and Computer Science;
  Eaton Hall;
  Lawrence, KS 66044;
  U.S.A.}
\email[Andrew Moorhead]{apmoorhead@gmail.com}

\thanks{Andrew Moorhead has received funding from the European Research Council
(ERC) under the European Unions Horizon 2020 research and
innovation programme (grant agreement No 771005).
}

\begin{abstract}
We show that the modular term condition higher commutator is equal to the modular hypercommutator. As a consequence, we arrive at a new proof that HC8 holds for modular varieties. Next, we develop a procedure for a modular variety for producing the higher dimensional congruences that characterize the hypercommutator. This procedure allows us to demonstrate that every modular variety has an infinite sequence of what we call higher dimensional Kiss terms. We use these results to extend the scope of a theorem of Opr{\v s}al from permutable varieties to modular varieties. 
\end{abstract} 
\maketitle % }}}1

\section{Introduction}

The topic of this paper is the higher commutator. First defined by Bulatov in \cite{buldef}, the higher commutator is a congruence lattice operation that is a higher arity generalization of the binary commutator. Our overarching goal is to determine the extent to which the well-developed theory of the binary commutator (see for instance \cite{FM}, \cite{gumm}, and \cite{kearneskiss}) can be found as a low-dimensional case inside of a general multidimensional commutator theory. The results of this article demonstrate that there is indeed a higher dimensional commutator theory for modular varieties which extends the essential parts of Chapter 4 of \cite{FM} along with the properties of the Kiss term defined in \cite{threeremarks}.

The first results of this kind were proved by Aichinger and Mudrinski \cite{aichmud}. They demonstrate that, for congruence permutable varieties, many properties of the binary modular commutator are true also of the higher commutator. They also show that the higher commutator for a permutable variety satisfies an inequality relating nested commutator terms, which they refer to as HC8. 

Opr{\v s}al gives alternative proofs of these properties for permutable varieties in \cite{orsalrel}. His development of the $(n)$-ary commutator flows from the properties of $(2^n)$-ary relations in permutable varieties, usually called matrices, that satisfy higher dimensional versions of reflexivity and symmetry. He also defines a sequence of terms, each derived from a Mal'cev operation, which may be used along with knowledge of the higher commutator to recursively generate all of the matrices for an algebra in a permutable variety. He goes on to deduce that the collection of clones on a set sharing congruences, higher commutators, and a Mal'cev operation has a greatest element. 

In this article we show that the same kind of result is true for modular varieties, except that the relations we consider are also required to satisfy a higher dimensional version of transitivity and Day terms fill the role of a Mal'cev operation. This is the first of two articles that expand on the theory developed in \cite{taylorsupnil}. There, we define a new operation which we call the hypercommutator. The term condition higher commutator is defined with respect to what we call a higher dimensional tolerence, while the hypercommutator is defined with respect to what we call a higher dimensional congruence. 

The structure of this article is as follows. In Section \ref{sec:hyper=tc}, we show that the modular term condition higher commutator is equal to the modular hypercommutator. We derive as a consequence an alternative proof of HC8 for modular varieties, first shown to hold by Wires in \cite{Wires}. In Section \ref{sec:hdkt} we define the higher dimensional Kiss terms and establish some of their basic properties, and in Section \ref{sec:application} we extend Opr{\v s}al's Theorem 1.3 of \cite{orsalrel} from permutable varieties to modular varieties. We rely on the notation and theory developed in \cite{taylorsupnil} and cite results from \cite{FM}, so the reader should have both of these on hand.

\section{The hypercommutator for a modular variety}\label{sec:hyper=tc}

\subsection{A relational characterization of the term condition modular higher commutator}
\cite{taylorsupnil} 

In this section we argue that the term condition commutator is equal to the hypercommutator in a modular variety. This can be used to produce alternative proofs of HC8 for modular varieties and that the modular commutator is equal to the modular two-term commutator. The proof combines machinery from \cite{moorheadHC} and \cite{taylorsupnil}. To make the presentation consistent, we will first build a more general version of the machinery from \cite{moorheadHC} using the notation from \cite{taylorsupnil}. 

\begin{defn}\label{def:modularshiftrotations}
Let $\var$ be a modular variety with Day terms $m_0, \dots, m_{k}$ (see Theorem 2.2 of \cite{FM}). Let $\A \in \var$ and $n \geq 2$. For each $ e \in k+1$ and $ i \neq j \in n$, we define the \textbf{$e$th shift rotation at $(i,j)$} as 
\begin{align*}
\rot^e_{i,j}: A^{2^n } &\to A^{2^n}\\
			\gamma &\mapsto m_e\left( \refl_j^1(\gamma). \gamma, \refl_i^0(\gamma), \refl_j^1(\refl_i^0(\gamma))\right)
\end{align*}
\end{defn}
While the notation for a shift rotation used here does not specify the dimension of the relation on which it acts, the dimension should be clear from the context (see \emph{(4)} of Lemma \ref{lem:basicrotationproperties}).
\begin{lem}[cf.\ 3.1 of \cite{moorheadHC}]\label{lem:basicrotationproperties} Let $\var$ be a modular variety with Day terms $m_0, \dots, m_{k}$. Let $\A \in \var$, $\delta \in \Con(\A)$, $n \geq 2$, and $ i\neq j \in n$. The following hold:

\begin{enumerate}

\item If $R$ is an $(n)$-dimensional tolerance of $\A$, then 
$
\rot^e_{i,j} : R \to R,
$
for all $e \in k+1$.

\item Let $\gamma \in A^{2^n}$, $f \in 2^{n \setminus \{i, j \}}$, and suppose the $(i)$-supporting line of  $\squares_{i,j}(\gamma)_f$ is a $\delta$-pair. The $(i)$-pivot line of $\squares_{i,j}(\gamma)_f$ is a $\delta$-pair if and only if the $(j)$-pivot line of $\squares_{i,j}(\rot^e_{i,j}(\gamma))_f$ is a $\delta$-pair for every $e \in k+1$.

\item If $\gamma \in A^{2^n} $ is such that every $(i)$-supporting line of $\gamma$ is a $\delta$-pair, then every $(j)$-supporting line of $\rot^e_{i,j}(\gamma)$ is a $\delta$-pair for every $e \in k+1$. The same statement holds if `supporting' is replaced with `cross section'.

\item For  $ i \neq j \neq l \in n$, 
\[\rot_{i,j}^e(\gamma) = 
\glue_{\{l\}} \left( \rot^{e}_{i,j}( \faces_{l}^0(\gamma)), \rot^{e}_{i,j}( \faces_{l}^1(\gamma)) \right).
\]
\end{enumerate}

\end{lem}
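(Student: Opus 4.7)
The plan is to verify items (1)–(4) in order, leaning at every step on the Day identities from Theorem 2.2 of \cite{FM}, in particular $m_e(x,y,y,x) = x$ together with the gluing equalities that link consecutive $m_e$'s for even/odd index. Items (1) and (4) are essentially bookkeeping, so I would dispatch them first, and reserve the real work for (2) and (3).

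For (1), I would write $\rot^e_{i,j}(\gamma) = m_e(\refl_j^1(\gamma), \gamma, \refl_i^0(\gamma), \refl_j^1(\refl_i^0(\gamma)))$ and check that each of the four arguments lies in $R$. The tuple $\gamma$ belongs to $R$ by assumption, while the remaining three are coordinate reflections of $\gamma$; higher dimensional tolerances are closed under the reflections $\refl_i^\epsilon$ by the symmetry clause in the definition from \cite{taylorsupnil}. Since $R$ is a subalgebra of $\A^{2^n}$, applying the term operation $m_e$ coordinatewise to four elements of $R$ again produces an element of $R$.

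For (4), I would observe that the coordinate reflections $\refl_i^\epsilon$ and $\refl_j^\epsilon$ commute with the face operators $\faces_l^\epsilon$ whenever $l \neq i, j$, and that $m_e$ is applied coordinatewise. Each of the four arguments to $m_e$ in the definition of $\rot^e_{i,j}(\gamma)$ therefore decomposes as the $\{l\}$-glue of its two $l$-faces, and the term operation distributes over the gluing. Rearranging gives the displayed formula directly.

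For (2) and (3), the key is to examine what $\rot^e_{i,j}$ does on a single $(i,j)$-square. Fix $f \in 2^{n \setminus \{i,j\}}$ and write out the four corners of $\squares_{i,j}(\gamma)_f$. The reflections $\refl_j^1$, $\refl_i^0$, and their composition each replace one row or column of this square by its mirror image, so the four $2^n$-tuples fed to $m_e$ agree on specific corners. Applying $m_e$ corner by corner, the identity $m_e(x,y,y,x) = x$ forces selected entries of $\rot^e_{i,j}(\gamma)$ on the square to coincide with the $(i)$-supporting line of the original, translating the hypothesis that this supporting line is a $\delta$-pair into the conclusion that the $(j)$-pivot line of the rotated square lies in $\delta$, and in the other direction the pivot hypothesis translates back the same way; this gives the biconditional (2). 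Part (3) follows by running the same identity chase uniformly across all $f$ for the supporting version and across the entire $(i)$-cross section for the cross-section version. The main obstacle I anticipate is purely combinatorial: the statement pairs an $(i)$-supporting line with a $(j)$-pivot line, so before invoking the Day identity one must carefully track how the composition of reflections permutes the four corners of each $(i,j)$-square, and confirm that the permutation is the same regardless of $e \in k+1$ so that a single argument handles every $e$.
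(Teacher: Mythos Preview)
Your approach is correct and matches the paper's: compute the four corners of each rotated $(i,j)$-square via the Day identities and then read off (2) and (3), while (1) and (4) are bookkeeping. The paper carries this out by displaying the rotated square explicitly and then citing Lemma~2.3 of \cite{FM}, which packages exactly the ``gluing equalities'' you invoke.

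Two small corrections. In (1), closure of $R$ under $\refl_j^\epsilon$ comes from the $(n)$-\emph{reflexivity} clause in the definition of an $(n)$-dimensional tolerance, not the symmetry clause. In your sketch of (2), the roles of the lines are tangled: the hypothesis that the $(i)$-supporting line of $\squares_{i,j}(\gamma)_f$ is a $\delta$-pair is a standing assumption, and the biconditional is between the $(i)$-\emph{pivot} line of the original square and the family of $(j)$-pivot lines of the rotations. The identity $m_e(x,y,y,x)=x$ makes the $(j)$-\emph{supporting} line of the rotated square constant (not equal to the original $(i)$-supporting line), and then one direction of the biconditional is an immediate congruence substitution while the converse genuinely needs the full system of Day equations across all $e$, i.e.\ Lemma~2.3 of \cite{FM}. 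Once you straighten out which line is which, your argument goes through exactly as the paper's does.
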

\begin{proof} Assume that $R$ is an $(n)$-dimensional tolerance of $\A$ and take $\gamma \in R$. Because $R$ is $(n)$-reflexive, it contains  $\refl_j^1(\gamma), \refl_i^0(\gamma)$, and $\refl^1_j(\refl_i^0(\gamma))$. Because $R$ is closed under the operations of $\A$, it follows that \emph{(1)} holds. 

To see that the remaining properties hold, notice that if $\gamma \in A^{2^n} $ is such that 
\begin{align*}
\squares_{i,j}(\gamma) &=
\left\{
\begin{tikzpicture}
    [ baseline=(center.base), font=\small,
      every node/.style={inner sep=0.25em}, scale=1 ]
    \node at (0.5,-0.5) (center) {\phantom{$\cdot$}}; % phantom = black magic
    \path (0,0)   node (nw) {$c_f$}
      -- ++(1,0)  node (ne) {$d_f$}
      -- ++(0,-1) node (se) {$b_f$}
      -- ++(-1,0) node (sw) {$a_f$};
    \draw (nw) -- (ne) -- (se) -- (sw) -- (nw);
    %\draw (se) edge [bend left] node[below] {$\delta$}(sw);
\end{tikzpicture}
: f \in 2^{n \setminus \{ i,j \}}
\right\}, \text{ then for each $e \in k+1$}\\
\squares_{i,j}(\rot^e_{i,j}(\gamma)) &=
\left\{
\begin{tikzpicture}
    [ baseline=(center.base), font=\small,
      every node/.style={inner sep=0.25em}, scale=1 ]
    \node at (0.5,-0.5) (center) {\phantom{$\cdot$}}; % phantom = black magic
    \path (0,0)   node (nw) {$c_f= m_e(c_f,c_f, c_f, c_f)$}
      -- ++(3,0)  node (ne) {$m_e(d_f,d_f, c_f, c_f)$}
      -- ++(0,-1) node (se) {$m_e(d_f,b_f, a_f, c_f)$}
      -- ++(-3,0) node (sw) {$c_f= m_e(c_f,a_f, a_f, c_f)$};
    \draw (nw) -- (ne) -- (se) -- (sw) -- (nw);
    %\draw (se) edge [bend left] node[below] {$\delta$}(sw);
\end{tikzpicture}
: f \in 2^{n \setminus \{ i,j \}}
\right\}.
\end{align*}
Notice that we have applied one of Day's identities to conclude that the $(j)$-supporting line of any $(i,j)$-cross section square of any $\rot_{i,j}^e(\gamma)$ is a constant pair. Using Lemma 2.3 from \cite{FM}, the reader may now easily verify that \emph{(2)} and \emph{(3)} hold.  

The final property \emph{(4)} asserts that taking a shift rotation of an $(n)$-cube $\gamma$ commutes with projecting $\gamma$ onto a lower dimensional face, provided the image of the projection still depends on the coordinates $i$ and $j$. This is a consequence of Definition \ref{def:modularshiftrotations}.

\end{proof}

We have the following immediate consequence of the properties of the shift rotations.

\begin{prop}[cf.\ 3.2 of \cite{moorheadHC} ]\label{prop:modularcentralityissymmetric}
Let $\var$ be a modular variety, $\A \in \var$, and $n \geq 2$. Let $R$ be an $(n)$-dimensional tolerance of $\A$ and let $\delta \in \Con(\A)$. If $R$ has $(\delta, j)$-centrality for some $j \in n$, then $R$ has $(\delta, i)$-centrality for each $i \in n$. 

\end{prop}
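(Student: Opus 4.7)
The statement asserts that centrality of a higher tolerance is axis-symmetric, so the plan is to use the shift rotations from Definition \ref{def:modularshiftrotations} as the symmetry witnesses, exactly as is done in 3.2 of \cite{moorheadHC} for the permutable case. The three properties of Lemma \ref{lem:basicrotationproperties} are the only ingredients needed: property \emph{(1)} says each $\rot^e_{i,j}$ maps $R$ into $R$, property \emph{(3)} converts $(i)$-supporting data into $(j)$-supporting data, and property \emph{(2)} is the crucial biconditional that relates an $(i)$-pivot $\delta$-pair of $\gamma$ to the family of $(j)$-pivot $\delta$-pairs of the rotations $\rot^e_{i,j}(\gamma)$ as $e$ ranges over $k+1$.

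Fix $i \in n$ with $i \neq j$. To establish $(\delta, i)$-centrality from $(\delta, j)$-centrality, I would begin with an arbitrary $\gamma \in R$ that instantiates the hypothesis of $(\delta, i)$-centrality, i.e., one whose $(i)$-supporting lines (and/or cross sections, according to the definition in \cite{taylorsupnil}) are $\delta$-pairs; the goal is to show that the $(i)$-pivot line of $\gamma$ is a $\delta$-pair. I apply each shift rotation $\rot^e_{i,j}$ for $e \in k+1$. By \emph{(1)}, every $\rot^e_{i,j}(\gamma)$ again lies in $R$. By \emph{(3)}, each of these rotated cubes has all of its $(j)$-supporting lines (respectively cross sections) being $\delta$-pairs. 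Therefore the hypothesis $(\delta, j)$-centrality applies to each $\rot^e_{i,j}(\gamma)$ and yields that its $(j)$-pivot line is a $\delta$-pair.

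Finally, I invoke the biconditional \emph{(2)} square by square: for every $f \in 2^{n \setminus \{i,j\}}$, I already know that the $(i)$-supporting line of $\squares_{i,j}(\gamma)_f$ is a $\delta$-pair, and I have just produced, for every $e \in k+1$, the fact that the $(j)$-pivot line of $\squares_{i,j}(\rot^e_{i,j}(\gamma))_f$ is a $\delta$-pair. The right-to-left direction of \emph{(2)} therefore gives that the $(i)$-pivot line of $\squares_{i,j}(\gamma)_f$ is a $\delta$-pair. Since this holds for every $f$, the $(i)$-pivot line of $\gamma$ itself is a $\delta$-pair, which is $(\delta, i)$-centrality.

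I do not anticipate any real obstacle; the content of the proposition is essentially the biconditional in \emph{(2)} repackaged with the closure of $R$ under rotations in \emph{(1)} and the supporting-line transport in \emph{(3)}. The only mild care point is the quantifier on $e$: the biconditional requires information about $\rot^e_{i,j}(\gamma)$ for \emph{every} $e \in k+1$, so one must apply $(\delta, j)$-centrality to all $k+1$ rotated cubes rather than just one, and this is exactly why the Day-term hypothesis gives the full family of shift rotations rather than a single Mal'cev rotation as in the permutable setting.
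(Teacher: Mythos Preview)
Your proposal is correct and follows precisely the same approach as the paper: use \emph{(1)} to keep $\rot^e_{i,j}(\gamma)$ in $R$, use \emph{(3)} to convert the $(i)$-supporting hypothesis into the $(j)$-supporting hypothesis for each rotated cube, apply $(\delta,j)$-centrality to obtain the $(j)$-pivot $\delta$-pairs, and then invoke the right-to-left direction of \emph{(2)} to recover the $(i)$-pivot $\delta$-pair of $\gamma$. Your write-up is in fact somewhat more careful than the paper's own proof, which contains a harmless $i$/$j$ swap in its first line.
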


\begin{proof}
Suppose that the assumptions hold and that $R$ has $(\delta, i)$-centrality. Let $ j \in n $ and take $\gamma \in R$ such that every every $(i)$-supporting line of $\gamma$ is a $\delta$-pair. If $i = j$ then it follows from our assumptions that the $(i)$-pivot line of $\gamma$ is a $\delta$-pair. If $i \neq j$ we apply \emph{(3)} then \emph{(2)} of Lemma \ref{lem:basicrotationproperties} and conclude that the $(i)$-pivot line of $\gamma$ is a $\delta$-pair. 
\end{proof}

In view of Proposition \ref{prop:modularcentralityissymmetric}, we may now assert with no chance for confusion that an $(n)$-dimensional tolerance of an algebra $\A$ belonging to a modular variety has $\delta$-centrality for some $\delta \in \Con(\A)$. 

As in both \cite{moorheadHC} and \cite{taylorsupnil}, we need to consider certain compositions of shift rotations and will use finite trees to index these compositions. Suppose that $\var$ is a modular variety with Day terms $m_0, \dots, m_k$ and let $n \geq 2$. Set

\[
\mathbb{D}_n = \langle  (k+1)^{ < n}; \leq \rangle,
\]
where $ (k+1)^{ < n} = \Union \{ (k+1)^i : i \in n \}$  and two sequences $d_1$, $d_2$ are $\leq$-related when $d_1 \subseteq d_2$. Note that $\mathbb{D}_n$ has the empty sequence $\emptyset$ as its root. For $\A \in \var$ and $\gamma \in A^{2^n}$, set $\gamma^\emptyset = \gamma$. We recursively define $\gamma^d = \rot_{i,i+1}^{d_i}(\gamma^c)$, where $d =(d_0, \dots , d_i) \in \mathbb{D}_{n}$ is non-empty and $c$ is the predecessor of $d$.

\begin{lem}[cf.\ 4.1 of \cite{moorheadHC}]\label{lem:treesuccessorrotations}
Let $\var$ be a modular variety with Day terms $m_0, \dots, m_k$. Let $\A \in \var$ and $R \leq \A^{2^n}$ be an $(n)$-dimensional tolerance for some $n\geq 2$. If $d \in \mathbb{D}_{n}$ is a tuple of length $i \in n$, then
\begin{enumerate}

\medskip
\item 
$\gamma^d \in R$, and 
\medskip

\item
if $f \in 2^{n \setminus \{i\}}$ satisfies $f(j) = 0$ for some $j\in i$, then the $(i)$-cross-section line of $\gamma^d$ at $f$
is a constant pair.
\end{enumerate}

\end{lem}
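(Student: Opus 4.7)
My plan is to prove both claims simultaneously by induction on the length $i$ of $d$. The base case $i = 0$ is immediate: $\gamma^{\emptyset} = \gamma$ lies in $R$ by hypothesis, and (2) holds vacuously since $j \in 0 = \emptyset$ is impossible.

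For the inductive step let $d = (d_0, \dots, d_{i-1})$ of length $i \geq 1$ have predecessor $c$ of length $i - 1$, so that $\gamma^d = \rot^{d_{i-1}}_{i-1, i}(\gamma^c)$. Claim (1) is then immediate: by induction $\gamma^c \in R$, and Lemma \ref{lem:basicrotationproperties}(1) gives $\gamma^d \in R$. For claim (2), I would fix $f \in 2^{n \setminus \{i\}}$ with $f(j_0) = 0$ for some $j_0 \in i$, set $f' = f|_{n \setminus \{i-1, i\}}$, and label the four values of $\gamma^c$ on $\squares_{i-1, i}(\gamma^c)_{f'}$ as $c_{f'}, d_{f'}, a_{f'}, b_{f'}$ in accordance with the diagram used in the proof of Lemma \ref{lem:basicrotationproperties}. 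The rotation formula then shows that in $\squares_{i-1, i}(\gamma^d)_{f'}$ both positions with $x_{i-1} = 0$ evaluate to $c_{f'}$. Hence, if $f(i-1) = 0$, the $(i)$-cross section line of $\gamma^d$ at $f$ is $(c_{f'}, c_{f'})$, a constant pair, and we are done.

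Suppose instead $f(i-1) = 1$. Then $j_0 \neq i-1$, forcing $j_0 \in i-1$. I would then apply the inductive hypothesis for (2) to $\gamma^c$ at the two elements $g_0, g_1 \in 2^{n \setminus \{i-1\}}$ obtained from $f|_{n \setminus \{i-1, i\}}$ by appending $x_i = 0$ and $x_i = 1$ respectively; both qualify because $j_0 \in i-1$. This yields $a_{f'} = b_{f'}$ and $c_{f'} = d_{f'}$, whereupon the two values of $\gamma^d$ at $(x_{i-1}, x_i) \in \{(1, 0), (1, 1)\}$, namely $m_{d_{i-1}}(d_{f'}, b_{f'}, a_{f'}, c_{f'})$ and $m_{d_{i-1}}(d_{f'}, d_{f'}, c_{f'}, c_{f'})$, both collapse to $c_{f'}$ via the Day identity $m_e(x, y, y, x) = x$, completing the case.

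The main obstacle is the coupling between (1) and (2) across the induction: (1) in isolation is a one-line consequence of Lemma \ref{lem:basicrotationproperties}(1), but the $f(i-1) = 1$ case of (2) genuinely needs the $(i-1)$-cross section constancy of $\gamma^c$ furnished by the inductive form of (2) itself. Once this is set up, the rest is bookkeeping plus one application of a Day identity.
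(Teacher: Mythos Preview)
Your proof is correct and follows essentially the same approach as the paper: induction on the length of $d$, with (1) handled by Lemma~\ref{lem:basicrotationproperties}(1) and (2) by a two-case analysis of the relevant $(i-1,i)$-square, invoking the Day identity $m_e(x,y,y,x)=x$ in the second case. The only cosmetic difference is that you split cases on the value $f(i-1)$ while the paper splits on whether the witnessing index $j_0$ equals $i-1$; these are equivalent, and your split is arguably slightly cleaner since in your Case~2 you only need the right column to be constant rather than the whole square.
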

\begin{proof}
The result is trivially true for $\gamma^{\emptyset} = \gamma$. Suppose that it holds for $c \in \mathbb{D}_{n}$ of length $i$ and let $d=(d_0, \dots, d_i)$ be a successor of $c$. Set 
$
\gamma^d = \rot^{d_i}_{i,i+1}(\gamma^c). 
$
Notice that \emph{(1)} of Lemma \ref{lem:basicrotationproperties} guarantees that $\gamma^d \in R$, which establishes \emph{(1)} of this lemma.

Now let $f \in 2^{n\setminus{i+1}}$ be such that $f(j) = 0$ for some $j \in i+1$, and let $f^*$ be the restriction of $f$ to the set $n\setminus{\{i, i+1\}}$.  There are two cases to consider.

\begin{enumerate}
\item[Case 1:] Suppose $j \notin i$, in which case $f(i)=0$. If 
\[
\squares_{i,i+1}(\gamma^c)_{f^*} = \Square[a][e][b][d],
\]  
then it follows from the proof of Lemma \ref{lem:basicrotationproperties} that 
\[
\squares_{i,i+1}(\gamma^d)_{f^*} = 
 \begin{tikzpicture}
    [ baseline=(center.base), font=\small,
      every node/.style={inner sep=0.25em}, scale=1 ]
    \node at (0.5,-0.5) (center) {\phantom{$\cdot$}}; % phantom = black magic
    \path (0,0)   node (nw) {$e= m_{d_i}(e,e, e, e)$}
      -- ++(3,0)  node (ne) {$m_{d_i}(d,d, e, e)$}
      -- ++(0,-1) node (se) {$m_{d_i}(d,b, a, e)$}
      -- ++(-3,0) node (sw) {$e= m_{d_i}(e,a, a, e)$};
    \draw (nw) -- (ne) -- (se) -- (sw) -- (nw);
    %\draw (se) edge [bend left] node[below] {$\delta$}(sw);
\end{tikzpicture}.
\]
The left column of the above square is equal to the $(i+1)$-cross-section line of $\gamma^d$ at $f$ and it is a constant pair, as claimed.  
\item[Case 2:] Suppose $ j \in i$. In this case we apply the inductive assumption that \emph{(2)} holds for $\gamma^c$ and conclude that 
\[
\squares_{i, i+1}(\gamma^c)_{f^*} = \Square[a][e][a][e], \text{ so it follows that}
\]
 
\[
\squares_{i,i+1}(\gamma^d)_{f^*} = 
\begin{tikzpicture}
    [ baseline=(center.base), font=\small,
      every node/.style={inner sep=0.25em}, scale=1 ]
    \node at (0.5,-0.5) (center) {\phantom{$\cdot$}}; % phantom = black magic
    \path (0,0)   node (nw) {$e= m_{d_i}(e,e, e, e)$}
      -- ++(3,0)  node (ne) {$m_{d_i}(e,e, e, e)=e$}
      -- ++(0,-1) node (se) {$m_{d_i}(e,a, a, e)=e$}
      -- ++(-3,0) node (sw) {$e= m_{d_i}(e,a, a, e)$};
    \draw (nw) -- (ne) -- (se) -- (sw) -- (nw);
    %\draw (se) edge [bend left] node[below] {$\delta$}(sw);
\end{tikzpicture}.
\]
The $(i+1)$-cross-section line of $\gamma^d$ at $f$ is either the left column or right column of the above square, and each of these columns is a constant pair. This finishes the proof. 
\end{enumerate}  
\end{proof}

\begin{prop}[cf.\ 4.7 of \cite{taylorsupnil}]\label{prop:tchascentrality}
Let $\var$ be a modular variety with Day terms $m_0, \dots, m_k$. Let $\A \in \var$, $n \geq 2$, $\delta \in \Con(\A)$. Let $l \in n$. If $R \leq \A^{2^n}$ is an $(n)$-dimensional tolerance of $\A$ that has $\delta$-centrality, then $R^{\circ_l}$ is also an $(n)$-dimensional tolerance of $\A$ that has $\delta$-centrality. 
\end{prop}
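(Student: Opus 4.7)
The plan is to reduce to checking $(\delta, i)$-centrality of $R^{\circ_l}$ at a coordinate $i \neq l$, which is permissible via Proposition~\ref{prop:modularcentralityissymmetric} and possible because $n \geq 2$. Choosing $i \neq l$ aligns the centrality condition with the composition structure along coordinate $l$: the $(i)$-supporting and $(i)$-pivot lines of any $\gamma \in R^{\circ_l}$ each lie entirely inside one of the two sub-cubes separated by coordinate $l$. That $R^{\circ_l}$ is itself an $(n)$-dimensional tolerance is a structural fact about coordinate-$l$ composition of higher dimensional tolerances that I expect to be available from \cite{taylorsupnil}, and I would cite it at the outset.

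Fix such an $i$ and take $\gamma \in R^{\circ_l}$ all of whose $(i)$-supporting lines are $\delta$-pairs. Decompose $\gamma = \glue_{\{l\}}(\gamma_1, \gamma_2)$ with $\gamma_1, \gamma_2 \in R$ and $\faces_l^1(\gamma_1) = \faces_l^0(\gamma_2)$. The $(i)$-supporting lines of $\gamma$ lying in the outer faces $\faces_l^0(\gamma)$ and $\faces_l^1(\gamma)$ coincide with certain $(i)$-supporting lines of $\gamma_1$ and $\gamma_2$ respectively, but each of $\gamma_1$ and $\gamma_2$ has additional inner $(i)$-supporting lines on the shared face that the hypothesis does not directly control. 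Showing that these inner supporting lines are also $\delta$-pairs, so that $(\delta,i)$-centrality of $R$ can be applied to $\gamma_1$ and $\gamma_2$ separately, is the main obstacle.

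To address this, my plan is to apply the tree of shift rotations $\gamma \mapsto \gamma^d$ for $d \in \mathbb{D}_n$. By Lemma~\ref{lem:basicrotationproperties}(4), rotations at coordinates not involving $l$ commute with the gluing $\glue_{\{l\}}$, so the decomposition $\gamma^d = \glue_{\{l\}}(\gamma_1^d, \gamma_2^d)$ persists with $\gamma_s^d \in R$; and by Lemma~\ref{lem:treesuccessorrotations}(2), after sufficiently many such rotations many cross-section lines are forced to be constant pairs, which are trivially $\delta$-pairs. Once a suitably rotated cube has each of its inner $(i)$-supporting lines collapsed in this way, the remaining $(i)$-supporting lines of each $\gamma_s^d$ are $\delta$-pairs by hypothesis, so $(\delta,i)$-centrality of $R$ gives that the $(i)$-pivot line of each $\gamma_s^d$ is a $\delta$-pair, and hence so is the $(i)$-pivot line of $\gamma^d$. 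Finally, Lemma~\ref{lem:basicrotationproperties}(2) will translate this conclusion back cross-section by cross-section to recover that the $(i)$-pivot line of $\gamma$ itself is a $\delta$-pair. The key technical point to verify is that the rotation tree can be chosen so as to collapse the troublesome inner supporting lines while preserving the decomposition along coordinate $l$, which is where I expect the bulk of the work to lie.
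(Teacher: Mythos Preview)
Your outline tracks the paper's argument, but there is a real gap in the step where you assert that rotations collapse all inner supporting lines of \emph{each} $\gamma_s^d$. Take $l=n-1$ without loss; then you must use a tree avoiding coordinate $n-1$, so at best $\mathbb{D}_{n-1}$ (the full $\mathbb{D}_n$ you name ends with $\rot_{n-2,n-1}$, which does not commute with $\glue_{\{n-1\}}$). For a leaf $d$ of $\mathbb{D}_{n-1}$, Lemma~\ref{lem:treesuccessorrotations}(2) makes the $(n-2)$-cross-section line at $f$ constant exactly when $f(j)=0$ for some $j<n-2$. For $\gamma_1^d$ the inner face sits at $f(n-1)=1$, and every inner $(n-2)$-supporting line there does satisfy that condition, so those collapse as you say. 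But for $\gamma_2^d$ the inner face sits at $f(n-1)=0$, and the inner $(n-2)$-supporting line at the position with $f(j)=1$ for all $j<n-2$ and $f(n-1)=0$ is \emph{not} covered by Lemma~\ref{lem:treesuccessorrotations}. This line is the $(n-2)$-pivot line of the shared face $(\mu_1)^d$, and neither the hypothesis on $\gamma$ nor the rotations give it to you directly.

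The missing idea is that this uncontrolled line coincides with the $(n-2)$-pivot line of $\gamma_1^d$. So you must first apply $(\delta,n-2)$-centrality of $R$ to $\gamma_1^d$ to learn that line is a $\delta$-pair, and only then are the hypotheses in place to treat $\gamma_2^d$. The pieces cannot be handled independently; you have to propagate along the chain, feeding the output of centrality on one piece in as a supporting-line hypothesis for the next. This is also why you need an arbitrary chain $\mu_0,\dots,\mu_{s-1}$ rather than a two-piece decomposition, since $R^{\circ_l}$ is the full transitive closure in direction $l$. The paper carries out exactly this propagation (Claim~\ref{claim:perpclaim2}), working with the $(n-1)$-dimensional slices $\mu_r$ rather than the glued $n$-cubes, and then runs the leaf-to-root induction via Lemma~\ref{lem:basicrotationproperties}(2) that you correctly anticipate at the end (Claim~\ref{claim:perpclaim3}).
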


\begin{proof}
Before we embark on the proof, we point out that the argument we give is an amalgamation of the proof that the modular commutator distributes over arbitrary joins (see 5.1 of \cite{moorheadHC}) and the proof of the `perpendicular stage' (see 4.7 of \cite{taylorsupnil}). Each of those proofs is accompanied by a picture that the reader may find useful for visualizing the argument. 

Without loss it suffices to consider the case when $l=n-1$. We noted in Lemma 2.9 of \cite{taylorsupnil} that $R^{\circ_{n-1}} $ is an $(n)$-dimensional tolerance of $\A$, so it remains to verify that $R^{\circ_{n-1}}$ has $\delta$-centrality whenever $R$ has $\delta$-centrality. By Proposition \ref{prop:modularcentralityissymmetric}, it is enough to show that $R^{\circ_{n-1}}$ has $(\delta, 0)$-centrality. 

So, let $\gamma \in R^{\circ_{n-1}}$ be such that every $(0)$-supporting line is a $\delta$-pair. Our task is to show that the $(0)$-pivot line of $\gamma$ is also a $\delta$-pair. By the definition of $R^{\circ_{n-1}}$, there are $\mu_0, \dots, \mu_{s-1} \in A^{2^{n-1}}$ so that 
\begin{enumerate}
\item $\faces_{n-1}^0(\gamma) = \mu_0$,
\item $ \faces_{n-1}^1(\gamma) = \mu_{s-1}$, and
\item $\glue_{\{n-1\}}(\langle \mu_r, \mu_{r+1} \rangle ) \in R$, for each 
$r \in s-1$.
\end{enumerate}

\begin{claim}\label{claim:perpclaim1}

Take $d \in \mathbb{D}_{n-1}$ to be a leaf. The sequence 
$
(\mu_0)^d, \dots, (\mu_{s-1})^d \in A^{2^{n-1}}
$
satisfies
\begin{enumerate}
\item for all $r \in s$, each $(n-2)$-supporting line of $(\mu_r)^d$ is a $\delta$-pair,
\item every $(n-2)$-cross section line of $(\mu_0)^d $ is a $\delta$-pair, and
\item $\glue_{\{n-1\}}(\langle (\mu_r)^d, (\mu_{r+1})^d \rangle) \in R$,
 for all 
$r \in s-1$.

\end{enumerate}
\end{claim}

\begin{claimproof}
Suppose $d = (d_0, \dots, d_{n-3})$. The first property of the claim follows from \emph{(2)} of Lemma \ref{lem:treesuccessorrotations} and the fact that $\delta$ contains all constant pairs. 
To show the second property of the claim, we proceed by induction over the branch in $\mathbb{D}_{n-3}$ determined by $d = (d_0, \dots, d_{n-3})$. We assume $\mu_0 = \faces_{n-1}^0(\gamma)$ and that every $(0)$-supporting line of $\gamma$ is a $\delta$-pair. It follows that every $(0)$-cross section line of $\mu_0 = (\mu_0)^\emptyset$ is a $\delta$-pair, which establishes the basis of the induction. Now let $(d_0, \dots, d_i)$ be an ancestor of $d$ and suppose that every $(i+1)$-cross section line of $(\mu_0)^{(d_0, \dots, d_i)}$ is a $\delta$-pair. Now \emph{(3)} of Lemma \ref{lem:basicrotationproperties} shows that every $(i+2)$-cross section line of $(\mu_0)^{(d_0, \dots, d_{i+1})}$ is a $\delta$-pair. It follows that every $(n-2)$-cross section line of $(\mu_0)^d$ is a $\delta$-pair, as claimed. 

A similar induction using \emph{(1)} and \emph{(4)} of Lemma \ref{lem:basicrotationproperties} establishes the third property of the claim.

\end{claimproof}

\begin{claim}\label{claim:perpclaim2}
If $d \in \mathbb{D}_{n-1}$ is a leaf, then the $(n-2)$-pivot line of $(\mu_{r+1})^d$ is a $\delta$-pair for all $r \in s-1$.
\end{claim}
\begin{claimproof}
The claim follows by induction on $r \in  s$. The claim holds for $r=0$ by \emph{(2)} of Claim  \ref{claim:perpclaim1}. Suppose the claim holds for $\mu_r$ for $r \in s-2$. This assumption along with \emph{(1)} and \emph{(3)} of Claim \ref{claim:perpclaim1} show that every $(n-2)$-supporting line of
\[
\glue_{\{n-1\}}(\langle (\mu_{r+1})^d, (\mu_{r+2})^d \rangle) \in R
\]
is a $\delta$-pair. We now apply the assumption that $R$ has $\delta$-centrality and conclude that the $(n-2)$-pivot line of this cube is also a $\delta$-pair. Because 
\[\glue_{\{n-1\}}(\langle (\mu_{r+1})^d, (\mu_{r+2})^d \rangle)
\] and $(\mu_{r+2})^d$ have the same $(n-2)$-pivot line, the claim is proved. 
\end{claimproof}

\begin{claim}\label{claim:perpclaim3}
Let $c  \in \mathbb{D}_{n-1}$ be a tuple of length $z \in n-1$. The $(z)$-pivot line of $(\mu_{s-1})^c$ is a $\delta$-pair. In particular, the $(0)$-pivot line of 
$(\mu_{s-1})^\emptyset = \mu_{s-1}$ is a $\delta$-pair.
\end{claim}
\begin{claimproof}

We proceed by an induction from the leaves of $\mathbb{D}_{n-1}$ to its root. The basis has been established by Claim \ref{claim:perpclaim2}. Suppose that the claim holds for all tuples of length $z +1 \in n-1$ belonging to $\mathbb{D}_{n-1}$ and let $c \in \mathbb{D}_{n-1}$ be a tuple of length $z$. Our assumption that every $(0)$-supporting line of $\gamma$ is a $\delta$-pair implies that every $(0)$-supporting line of $\mu_{s-1}$ is a $\delta$-pair. An induction using \emph{(3)} of Lemma \ref{lem:basicrotationproperties} shows that every $(z)$-supporting line of $(\mu_{s-1})^c$ is a $\delta$-pair. In particular, the $(z)$-supporting line of the $(z,z+1))$-pivot square of $(\mu_{s-1})^c$ is a $\delta$-pair. The inductive assumption that the claim holds for all $d \in \mathbb{D}_{n-1}$ of length $z+1$ implies that the $(z+1)$-pivot line of $\rot_{z,z+1}^{e}((\mu_{s-1})^c)$ is a $\delta$-pair for every $e \in k+1$. It follows that \emph{(2)} of Lemma \ref{lem:basicrotationproperties} may be applied, and we conclude that the $(z)$-pivot line of $(\mu_{s-1})^c$ is a $\delta$-pair.

\end{claimproof}

The $(0)$-pivot lines of $\gamma$ and $\mu_{s-1}$ are the same, and the conclusion of Claim \ref{claim:perpclaim3} is that the $(0)$-pivot line of $\mu_{s-1}$ is a $\delta$-pair. This is what we wanted to show, so the proof is finished.

\end{proof}

\begin{thm}\label{thm:tc=hhcinmodular}
If $\var$ is a modular variety, then the $(n)$-ary term condition commutator for $\var$ is equal to the $(n)$-ary hypercommutator for $\var$.
\end{thm}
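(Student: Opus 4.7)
The plan is to establish two inequalities. Both the $(n)$-ary term condition commutator and the $(n)$-ary hypercommutator of congruences $\alpha_0, \dots, \alpha_{n-1} \in \Con(\A)$ are defined as the least $\delta \in \Con(\A)$ such that a certain higher dimensional relation of $\A$ has $\delta$-centrality. For the term condition commutator the relation is an $(n)$-dimensional tolerance $T$ generated by the matrix data coming from $\alpha_0, \dots, \alpha_{n-1}$, while for the hypercommutator the relation is the $(n)$-dimensional congruence $C$ generated by the same data. From the theory of \cite{taylorsupnil}, $C$ is obtained from $T$ by iteratively closing under the compositions $(\cdot)^{\circ_l}$ for $l \in n$.

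The easy direction is that the term condition commutator is bounded above by the hypercommutator. Because every $(n)$-dimensional congruence is in particular an $(n)$-dimensional tolerance, $T \leq C$, and $\delta$-centrality of $C$ immediately transfers to $\delta$-centrality of $T$: the condition only quantifies over cubes belonging to the relation in question, and $T$ has fewer such cubes.

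For the substantive direction, I would let $\delta$ be the term condition commutator, so that the witnessing tolerance $T$ has $\delta$-centrality, and then iteratively apply Proposition \ref{prop:tchascentrality} to conclude that each $(\cdot)^{\circ_l}$ step preserves $\delta$-centrality. Since $\delta$-centrality is a pointwise condition on individual cubes, it also persists under directed unions, which takes care of any limit stages if the closure needs to be iterated transfinitely. Running through all $l \in n$ until the relation stabilizes yields the $(n)$-dimensional congruence $C$, which therefore inherits $\delta$-centrality; this bounds the hypercommutator by $\delta$. The real work has already been carried out in the shift-rotation machinery culminating in Proposition \ref{prop:tchascentrality}, where modularity enters through the Day terms; given that proposition, the main point left to verify is simply that the iterated $\circ_l$ closures of $T$ really do reach $C$, and this is a standard description of higher dimensional congruences from \cite{taylorsupnil}.
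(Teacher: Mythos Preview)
Your proposal is correct and matches the paper's proof essentially line for line: the paper also notes that the hypercommutator dominates the term condition commutator in general, then sets $\delta = [\theta_0,\dots,\theta_{n-1}]_{TC}$, observes that $M(\theta_0,\dots,\theta_{n-1})$ has $\delta$-centrality by definition, and invokes Proposition~\ref{prop:tchascentrality} inductively along the $\circ_l$-closures to push $\delta$-centrality up to $\Delta(\theta_0,\dots,\theta_{n-1}) = \tc(M(\theta_0,\dots,\theta_{n-1}))$. Your remarks about directed unions and possible transfinite iteration are harmless elaborations of what the paper compresses into ``an induction using Proposition~\ref{prop:tchascentrality}.''
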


\begin{proof}
Let $\var$ be modular and let $\A \in \var$. Take $n \geq 2$ and pick some $(\theta_0, \dots, \theta_{n-1}) \in \Con(\A)^n$. Our task is to show that

\[
[\theta_0, \dots, \theta_{n-1}]_{TC} = [ \theta_0, \dots, \theta_{n-1}]_{H}.
\]
The hypercommutator is always bigger than the term condition commutator, so it suffices to show that $\Delta(\theta_0, \dots,\theta_{n-1})$ has $\delta$-centrality, where $\delta = [\theta_0, \dots, \theta_{n-1}]_{TC}$ (see 2.17 of \cite{taylorsupnil}). This follows from an induction using Proposition \ref{prop:tchascentrality}. Indeed, 

\[
\Delta(\theta_0, \dots, \theta_{n-1}) = \tc(M(\theta_0, \dots, \theta_{n-1})),
\]
and $M(\theta_0, \dots, \theta_{n-1})$ has $\delta$-centrality by definition. 
\end{proof}

The theory of the hypercommutator for any variety includes HC8 as a true statement (we denote this fact by HHC8, see 4.13 of \cite{taylorsupnil}), so the following corollary is immediate. 

\begin{cor}
HC8 is a true statement belonging to the theory of the modular higher commutator.
\end{cor}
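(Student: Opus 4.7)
The plan is very short: the corollary follows by direct substitution, combining Theorem \ref{thm:tc=hhcinmodular} with the already-known HHC8.

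In detail, HC8 is a universally quantified inequality between two nested commutator expressions built from congruences of an arbitrary algebra in $\var$. I would first write out this inequality using the symbol $[\cdots]_{TC}$ for each occurrence of a higher commutator. Theorem \ref{thm:tc=hhcinmodular} states that, for every $\A \in \var$ and every arity $n \geq 2$, the term condition commutator $[\theta_0, \dots, \theta_{n-1}]_{TC}$ and the hypercommutator $[\theta_0, \dots, \theta_{n-1}]_{H}$ agree as functions of $\ntheta \in \Con(\A)^n$. Hence each $[\cdots]_{TC}$ in the HC8 expression may be replaced by the corresponding $[\cdots]_{H}$, reducing the problem to the same inequality phrased in terms of the hypercommutator. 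That inequality is precisely the content of HHC8 (4.13 of \cite{taylorsupnil}), which is already established for every variety. Translating the conclusion back via the same equality, one recovers HC8 for the modular term condition higher commutator.

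There is essentially no obstacle remaining: the hard work lies entirely in Theorem \ref{thm:tc=hhcinmodular}, whose proof uses the shift rotation machinery of Lemmas \ref{lem:basicrotationproperties} and \ref{lem:treesuccessorrotations} together with the transitive-closure argument of Proposition \ref{prop:tchascentrality}. Once that identification is in hand, the corollary is purely formal, and the only thing to verify is that the substitution $[\cdots]_{TC} \leftrightarrow [\cdots]_{H}$ is legitimate at each nesting level appearing in HC8. This is immediate, since Theorem \ref{thm:tc=hhcinmodular} is an equality of operations on $\Con(\A)$ in every arity $n \geq 2$, and HC8 involves only finitely many such nested operations on congruences of a single algebra $\A \in \var$. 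Therefore the inequality transports term-by-term, and the corollary follows.
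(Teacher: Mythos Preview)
Your proposal is correct and matches the paper's own argument: the paper simply remarks that HHC8 (4.13 of \cite{taylorsupnil}) holds for the hypercommutator in every variety, so by Theorem~\ref{thm:tc=hhcinmodular} the corollary is immediate. Your added observation that the substitution $[\cdots]_{TC}\leftrightarrow[\cdots]_H$ is valid at each nesting level is a reasonable explicit check, but the paper treats this as obvious and gives no further detail.
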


\subsection{The $\Delta$ relation in a modular variety}

In this section we develop a method for generating the $\Delta$ relation from lower dimensional $\Delta$ relations in the case that the algebra under investigation belongs to a modular variety. As we will see later, this characterization of $\Delta$ is important for the computations that demonstrate the existence of the `higher dimensional' Kiss terms for a modular variety.  

We first look at the relationship between the definition of a $(2)$-dimensional congruence and the development of the binary modular commutator in \cite{FM}. For this paragraph let $\A$ be an algebra belonging to a modular variety and let $\alpha, \beta \in \Con(\A)$. In Chapter 4 the authors define the relation $\Delta_{\beta, \alpha}$ to be the least congruence of $\beta$ that collapses any two diagonal pairs whose values are $\alpha$-related. The authors use $2\times2$ matrices to represent pairs in this relation, with the convention that rows or columns determine $\alpha$ or $\beta$ pairs, respectively. The reader is asked to prove in exercise 5 of Chapter 4 that $\Delta_{\beta, \alpha}$ is a $(2)$-dimensional congruence (we are translating the statement using our language). This means that, in the modular setting, the $\Delta_{\beta, \alpha}$ defined in \cite{FM} is the same relation the $\Delta(\alpha, \beta)$ defined in \cite{taylorsupnil}.

For us the notion of a $(2)$-dimensional congruence is taken as a definition, so the ideas in \cite{FM} can be turned around to give sufficient conditions for a $(2)$-dimensional tolerance to be a $(2)$-dimensional congruence in a modular variety. Informally, the next lemma asserts that in a modular variety, any $(2)$-dimensional tolerance that is `almost' a $(2)$-dimensional congruence is a $(2)$-dimensional congruence. 

\begin{lem}\label{lem:almost2transitivegivestransitive}
Let $\var$ be a modular variety and let $\A \in \var$. Let $k \neq l \in \nat$ and suppose that $R \leq \A^{2^{\{k,l\}}}$ is a $(2)$-dimensional tolerance of $\A$. If $\faces_k(R) \in \Con(\faces_k^0(R))$ and $\faces_k^0(R) \in \Con(\A)$, then $R$ is a $(2)$-dimensional congruence. 
\end{lem}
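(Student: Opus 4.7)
The content that $R$ is a $(2)$-dimensional congruence adds to the tolerance conditions on $R$ two transitivity-type requirements in the remaining coordinate direction: $\faces_l^0(R) \in \Con(\A)$, and $\faces_l(R) \in \Con(\faces_l^0(R))$. (The analogous conditions in direction $k$ are exactly the given hypotheses.) My plan is to verify both of these.

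The first is essentially a direct consequence of the hypothesis. The relation $\faces_l^0(R)$ on $\A$ is already a subalgebra of $\A^2$ and is reflexive and symmetric by the tolerance axioms of $R$. For transitivity, given $(a,c),(c,e) \in \faces_l^0(R)$, I use $(2)$-reflexivity of $R$ to extract $\pmat{a & a \\ c & c}, \pmat{c & c \\ e & e} \in R$; these two matrices share a row, so the hypothesis that $\faces_k(R) \in \Con(\faces_k^0(R))$ — which encodes transitivity of $R$ in the $k$-direction — yields $\pmat{a & a \\ e & e} \in R$, hence $(a,e) \in \faces_l^0(R)$.

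The core of the proof is the $l$-direction transitivity of $R$: given $\pmat{a&b\\c&d},\pmat{b&e\\d&f}\in R$, I must produce $\pmat{a&e\\c&f}\in R$. This is where modularity is essential. Following Day's shifting strategy from Chapter 4 of \cite{FM}, I will apply the Day terms $m_0,\dots,m_k$ entry-wise to a carefully chosen quadruple of matrices in $R$, assembled from the two given matrices together with their reflexive shadows guaranteed by the tolerance axioms for $R$. Since $R$ is a subalgebra of $\A^{2^{\{k,l\}}}$, the resulting sequence $\gamma_0,\gamma_1,\dots,\gamma_k$ lies in $R$. The Day identities $m_0(x,y,z,u)=x$, $m_k(x,y,z,u)=u$, $m_i(x,y,y,x)=x$, and the parity-alternating identities between consecutive $m_i$, should force each pair $(\gamma_i,\gamma_{i+1})$ to agree on an entire row. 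The row-transitivity hypothesis then propagates along the chain so that the end result $\pmat{a&e\\c&f}$ is witnessed to lie in $R$. Once this column-transitivity is established, $\faces_l(R)$ is seen to be a congruence on $\faces_l^0(R)$ as required, because compatibility with operations of $\faces_l^0(R)$ is automatic from $R$ being a subalgebra, and reflexivity and symmetry come from the tolerance axioms of $R$.

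The principal obstacle is the combinatorial choice of the four input matrices so that the parity alternation of the Day identities aligns with full-row agreement between consecutive $\gamma_i$, so that row-transitivity can actually be invoked at every step. This is the two-dimensional analogue of the classical Day argument used in \cite{FM} to prove that $\Delta_{\beta,\alpha}$ is closed in both directions — the dual structure provided by $\faces_k^0(R)\in\Con(\A)$ (which supplies "vertical" compatibility across $\theta$) and by $\faces_k(R)\in\Con(\faces_k^0(R))$ (which supplies horizontal transitivity) is exactly what is needed to thread the Day chain through the parity alternation.
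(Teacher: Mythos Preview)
Your overall plan is sound in spirit—use the Day terms together with the given $k$-direction transitivity to manufacture $l$-direction transitivity—but the specific mechanism you propose does not close, and the obstacle you flag as ``principal'' is in fact a genuine circularity rather than mere bookkeeping. If $A,B,C,D\in R$ and $\gamma_i=m_i(A,B,C,D)$, then indeed all $\gamma_i\in R$; the question is whether $k$-transitive chaining of the $\gamma_i$ can produce $\pmat{a&e\\c&f}$. For that you need $(a,e)$ and $(c,f)$ to occur as rows of some $\gamma_i$ and to be linked through consecutive row-agreements. Since $m_0$ and $m_k$ are projections, the only matrices in $R$ you can guarantee to have a row $(a,e)$ (resp.\ $(c,f)$) are the reflexive ones $\pmat{a&e\\a&e}$ and $\pmat{c&f\\c&f}$, so these are forced as $A$ and $D$. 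Now imposing the even identity $m_i(x,x,w,w)=m_{i+1}(x,x,w,w)$ on one row and the odd identity $m_i(x,y,y,w)=m_{i+1}(x,y,y,w)$ on the other forces $B=\pmat{a&e\\\rho}$ and $C=\pmat{c&f\\\rho}$ for some common row $\rho$; but $B,C\in R$ then says $(a,e)\;\faces_k(R)\;\rho\;\faces_k(R)\;(c,f)$, which is exactly the conclusion. So a single Day chain of elements of $R$ cannot be threaded without already knowing the answer.

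The paper breaks this circularity by applying modularity \emph{inside} the algebra $\faces_k^0(R)$ rather than inside $R$. First a special case of $l$-transitivity—the case where the shared row is a constant pair—is isolated and proved by the Shifting Lemma in $\Con(\faces_k^0(R))$, using the two projection kernels and the congruence $\faces_k(R)$; this is precisely where both hypotheses are spent. Then the general case is reduced, via Lemma~2.3 of \cite{FM}, to showing that a family of $m_e$-squares (with constant top row) lie in $R$, and each of these is obtained by computing with $m_e$ on \emph{pairs} in $\faces_k^0(R)$, using transitivity of $\faces_k(R)$, and finally invoking the constant-row special case. The Day terms are thus applied one level down, to columns rather than to whole matrices, and modularity enters twice (Shifting Lemma and FM~2.3) rather than once. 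That two-stage structure is the missing idea in your sketch.
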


\begin{proof}
Before we begin, we remark that this proof is essentially identical to the solution of exercise 5 of Chapter 4 of \cite{FM}. Because we have weakened the assumptions and changed the terminology, we reproduce the argument here for the convenience of the reader. 

In this proof every labeled square belonging to $R$ is oriented so that the $k$-direction is horizontal and the $l$-direction is vertical. Our goal is to show that $\faces_l(R)$ is a transitive relation. We begin by establishing the following special case. 

\begin{claim}\label{claim:shiftingclaim}
If 
$
\Square[a][c][a][d][.6], \Square[b][a][b][a][.6] \in R,
$
then $\Square[b][c][b][d][.6] \in R$.
\end{claim}
\begin{claimproof}
We denote by $\theta$ the congruence $\faces_k^0(R)$. The assumptions of the claim guarantee that $a,b,c$ and $d$ belong to the same $\theta$-class, so in particular 
\[
\LinePic[b][c][.6], \LinePic[b][d][.6] \in \theta.
\]
To demonstrate that these two pairs are related by $\faces_k(R)$, it suffices to apply the Shifting Lemma (see 2.4 of \cite{FM}) to the following diagram, where the $\eta_i$'s are projection kernels. 

\[
\begin{tikzpicture}
    [ baseline=(center.base), font=\small,
      every node/.style={inner sep=0.25em}, scale=1 ]
    \node at (0.5,-0.5) (center) {\phantom{$\cdot$}}; % phantom = black magic
    \path (0,0)   node (nw) {$\LinePic[b][c][.6]$}
      -- ++(2,0)  node (ne) {$\LinePic[a][c][.6]$}
      -- ++(0,-1.5) node (se) {$\LinePic[a][d][.6]$}
      -- ++(-2,0) node (sw) {$\LinePic[b][d][.6]$};
    \draw (nw) -- node[above] {$\eta_0$}(ne) -- (se) -- (sw) -- node[right] {$\eta_1$} (nw);
    
    \draw (ne) edge[bend left] node[right] {$\faces_k(R)$} (se);
    
    \end{tikzpicture}
\]

\end{claimproof}
Moving on to the general case, suppose that 
$
\Square[u][x][v][y][.6], \Square[r][u][s][v][.6] \in R.
$
Suppose that $\var$ has Day terms $m_0, \dots, m_n$. We want to show that $\Square[r][x][s][y][.6] \in R$. By 2.3 of \cite{FM}, it is enough to see that  
\[
\begin{tikzpicture}
    [ baseline=(center.base), font=\small,
      every node/.style={inner sep=0.25em}, scale=1 ]
    \node at (0.5,-0.5) (center) {\phantom{$\cdot$}}; % phantom = black magic
    \path (0,0)   node (nw) {$m_e(x,x,y,y)$}
      -- ++(2.2,0)  node (ne) {$m_e(x,x,y,y)$}
      -- ++(0,-1) node (se) {$m_e(r,u,v,s)$}
      -- ++(-2.2,0) node (sw) {$m_e(r,r,s,s)$};
    \draw (nw) -- (ne) -- (se) -- (sw) -- (nw);
    \end{tikzpicture}
    \in R,
\]
for every $e \in n+1$. The assumptions imply that both
\[
\LinePic[u][u][.6] \faces_k(R) \LinePic[v][v][.6] 
\qquad \text{and} \qquad
\LinePic[s][v][.6] \faces_k(R) \LinePic[r][u][.6],
\]
and it follows that 
\[
\LinePic[m_e(r,r,s,s)][m_e(u,u,v,v)][.6]
\faces_k(R)
\LinePic[m_e(r,r,r,r)][m_e(u,u,u,u)][.6]
= 
\LinePic[r][u][.6]
= 
\LinePic[m_e(r,v,v,r)][m_e(u,v,v,u)][.6]
\faces_k(R)
\LinePic[m_e(r,u,v,s)][m_e(u,u,v,v)][.6].
\]
Because $\faces_k(R)$ is transitive, we have shown that 
\newline
$
\begin{tikzpicture}
    [ baseline=(center.base), font=\small,
      every node/.style={inner sep=0.25em}, scale=1 ]
    \node at (0.5,-0.5) (center) {\phantom{$\cdot$}}; % phantom = black magic
    \path (0,0)   node (nw) {$m_e(u,u,v,v)$}
      -- ++(2.2,0)  node (ne) {$m_e(u,u,v,v)$}
      -- ++(0,-1) node (se) {$m_e(r,u,v,s)$}
      -- ++(-2.2,0) node (sw) {$m_e(r,r,s,s)$};
    \draw (nw) -- (ne) -- (se) -- (sw) -- (nw);
    \end{tikzpicture} \in R.
$ Furthermore, it is easy to see that
\newline
$
\begin{tikzpicture}
    [ baseline=(center.base), font=\small,
      every node/.style={inner sep=0.25em}, scale=1 ]
    \node at (0.5,-0.5) (center) {\phantom{$\cdot$}}; % phantom = black magic
    \path (0,0)   node (nw) {$m_e(x,x,y,y)$}
      -- ++(2.2,0)  node (ne) {$m_e(x,x,y,y)$}
      -- ++(0,-1) node (se) {$m_e(u,u,v,v)$}
      -- ++(-2.2,0) node (sw) {$m_e(u,u,v,v)$};
    \draw (nw) -- (ne) -- (se) -- (sw) -- (nw);
    \end{tikzpicture}
    \in R,
$
and the lemma follows by applying Claim \ref{claim:shiftingclaim}.

\end{proof}

We now argue that the previous lemma is actually a special case of a more general principle. Informally, this principle guarantees that in a modular variety, every $(n)$-dimensional tolerance that is `almost' an $(n)$-dimensional congruence is an $(n)$-dimensional congruence. More precisely, we have the following proposition.

\begin{prop}\label{prop:almostnconisncon}
Let $\var$ be a modular variety and let $\A \in \var$. Let $n \geq 2$ and suppose that $R \leq \A^{2^n}$ is an $(n)$-dimensional tolerance of $\A$ with the property that $\faces_i^0(R)$ is a $(n-1)$-dimensional congruence for each $i \in n$. If there exists $k\in n$ such that $\faces_k(R) \in \Con(\faces_k^0(R))$, then $R$ is an $(n)$-dimensional congruence. 
\end{prop}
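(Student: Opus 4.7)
The plan is to generalize the proof of Lemma \ref{lem:almost2transitivegivestransitive} from dimension $2$ to arbitrary $n$. Without loss of generality assume $k = 0$, and fix some $i \in n$ with $i \neq 0$. Since $R$ is already an $(n)$-dimensional tolerance and each $\faces_j^0(R)$ is an $(n-1)$-dimensional congruence by hypothesis, what remains is to show that $\faces_i(R)$ is transitive on $\faces_i^0(R)$.

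Following the structure of the two-dimensional proof, I would first establish an $(n)$-dimensional shifting claim analogous to Claim \ref{claim:shiftingclaim}: if $\gamma_1, \gamma_2 \in R$ are two cubes whose shared intermediate $i$-face $\faces_i^1(\gamma_1) = \faces_i^0(\gamma_2)$ is constant in the $0$-direction (that is, of the form $\refl_0(\mu)$ for some $(n-2)$-cube $\mu$, just as in the $n=2$ case the intermediate face was of the form $(a,a)$), then the $i$-composition of $\gamma_1$ and $\gamma_2$ lies in $R$. I would prove this by applying the Shifting Lemma inside the modular algebra $\faces_0^0(R) \leq \A^{2^{n-1}}$: the projection kernels onto the two $i$-faces of $\faces_0^0(R)$ take the role of $\eta_0$ and $\eta_1$ from the two-dimensional diagram, while $\faces_0(R) \in \Con(\faces_0^0(R))$ serves as the distinguished congruence. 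The cubes needed to realize the vertices of the shifting diagram can be produced because $R$ is a tolerance and because each $\faces_j^0(R)$ is an $(n-1)$-dimensional congruence.

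Second, I would reduce the general statement of $i$-transitivity in $R$ to the shifting claim using the Day terms $m_0, \dots, m_n$ of $\var$, exactly as in the concluding portion of the proof of Lemma \ref{lem:almost2transitivegivestransitive}: apply $m_e$ entry-wise to a quadruple of cubes drawn from $R$ and invoke Day's identities together with the shifting claim and with transitivity of $\faces_0(R)$ to conclude that the composed cube lies in $R$. The main obstacle, and the only place where the increased dimension really enters, will be the bookkeeping required to verify that all auxiliary cubes appearing in the shifting step and in the Day-term construction actually lie in $R$; here the full strength of the hypothesis that every $0$-face $\faces_j^0(R)$ is an $(n-1)$-dimensional congruence is used to ensure compatibility across the various lower-dimensional projections. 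Once this is done, the structure of the argument is essentially identical to the two-dimensional case.
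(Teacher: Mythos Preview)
Your approach is correct in spirit and would go through, but it takes a longer road than the paper. The paper does not redo the Shifting Lemma argument and the Day-term computation at dimension $n$. Instead it observes (Claim~\ref{claim:sufficienttochecksquares}) that $\faces_l(R)$ is transitive if and only if $\faces_l(\cut_{\{k,l\}}(R))$ is transitive, where $\cut_{\{k,l\}}(R)$ is the $2$-dimensional relation on the algebra $\cut_{\{k,l\}}(R)_{\{\langle k,0\rangle,\langle l,0\rangle\}} \leq \A^{2^{n\setminus\{k,l\}}}$ obtained by grouping the remaining $n-2$ coordinates. The hypotheses on $R$ translate directly into the hypotheses of Lemma~\ref{lem:almost2transitivegivestransitive} for this $2$-dimensional relation, so that lemma applies verbatim and gives transitivity in the $l$-direction at once.

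Your plan amounts to carrying out the proof of Lemma~\ref{lem:almost2transitivegivestransitive} inside the algebra $\faces_0^0(R)$ rather than inside $\A$; that is essentially the same content as the paper's reduction, just unpackaged. The cost is the bookkeeping you anticipate. The paper's $\cut$ reduction buys you a one-line application of the already-proved $2$-dimensional lemma and avoids reverifying the Shifting Lemma step and the Day-term identities in the higher-dimensional setting.
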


\begin{proof}
Because $R$ is an $(n)$-dimensional tolerance, we just need to show that $R$ is $(n)$-transitive. Let $k$ be as in the proposition statement. We assume that $\faces_k(R)$ is transitive, so let $l \neq k \in n$. We use Lemma \ref{lem:almost2transitivegivestransitive} to deduce that $R$ is transitive in the $l$-direction from the transitivity of $R$ in the $k$-direction. We first ask the reader to verify the following easy claim.

\begin{claim}\label{claim:sufficienttochecksquares}
Let $s\neq t \in n$. 
 The relation $\faces_s(R) $ is transitive if and only if $\faces_s(\cut_{\{s,t\}}(R))$ is transitive. Similarly, $\faces_t(R) $ is transitive if and only if $\faces_t(\cut_{\{s,t\}}(R))$ is transitive. 
\end{claim}

Now, a typical element of  
$\cut_{\{k,l\}}(R)$ looks like

\[
\begin{tikzpicture}
    [ baseline=(center.base), font=\small,
      every node/.style={inner sep=0.25em}, scale=1 ]
    \node at (0.5,-0.5) (center) {\phantom{$\cdot$}}; % phantom = black magic
    \path (0,0)   node (nw) {$\cut_{\{k,l\}}(\gamma)_{\{\langle k,0 \rangle, \langle l,1\rangle \}}$}
      -- ++(7,0)  node (ne) {$\cut_{\{k,l\}}(\gamma)_{\{\langle k,1 \rangle, \langle l,1\rangle \}}$}
      -- ++(0,-2) node (se) {$\cut_{\{k,l\}}(\gamma)_{\{\langle k,1 \rangle, \langle l,0\rangle \}}$}
      -- ++(-7,0) node (sw) {$\cut_{\{k,l\}}(\gamma)_{\{\langle k,0 \rangle, \langle l,0\rangle \}}$};
      \node at (3.5,-1) {$\leftarrow \faces_k(\cut_{\{k,l\}}(R)) \rightarrow$};
    \draw (nw) --  (ne) -- (se) -- node[below] {$\faces_k(\faces_l^0(R))$} (sw) -- node[left] {$\faces_l(\faces_k^0(R))$}(nw);
    \end{tikzpicture}.
\]
We can apply Lemma \ref{lem:almost2transitivegivestransitive}, with the algebra under consideration set to 
\newline
$\cut_{\{k,l\}}(R)_{\{\langle k, 0 \rangle , \langle l, 0 \rangle\}}$ and the $(2)$-dimensional relation under consideration set to $\cut_{\{k,l\}}(R)$. Claim \ref{claim:sufficienttochecksquares} ensures that $\faces_k(\cut_{\{k,l\}}(R))$ is transitive. It is similarly straightforward to see that the other assumptions of the lemma are satisfied, so we conclude that $\faces_l(\cut_{\{k,l\}}(R))$ is transitive. 

\end{proof}

\begin{thm}\label{thm:deltaofdeltaisdelta}
Let $\var$ be a modular variety. Let $\A \in \var$, $S \subseteq \nat$ with $|S| \geq 3$, and $\{\theta_k\}_{k \in S} \in \Con(\A)^S$. Let $Q \subseteq S$. If for each $i \in Q$ we set
\begin{align*}
\alpha_i &= \faces_i
\bigg(
\Delta
\left(
\{\theta_j\}_{j \in \{i\} \union S \setminus Q}
\right)
\bigg), \text{ then}\\
\Delta
\left(
\{ \theta_k \}_{k \in S}
\right)
 &= \glue_Q
\bigg(
\Delta
\left(
\{\alpha_i\}_{ i \in Q}
\right)
\bigg).
\end{align*}

\end{thm}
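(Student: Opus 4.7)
The plan is to establish the two inclusions separately, exploiting the modular characterization of $\Delta$ as the transitive closure of its matrix generators (Theorem \ref{thm:tc=hhcinmodular}) together with Proposition \ref{prop:almostnconisncon} as the tool for recognizing $(n)$-dimensional congruences.

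For the inclusion $\Delta(\{\theta_k\}_{k\in S}) \subseteq \glue_Q(\Delta(\{\alpha_i\}_{i \in Q}))$, I would show that the right-hand side is an $(|S|)$-dimensional congruence containing a generating set of matrices for $\Delta(\{\theta_k\}_{k\in S})$. The right-hand side is an $(|S|)$-dimensional tolerance by construction; its faces in directions $i \in Q$ equal the $\alpha_i$'s, which by definition are $(|S|-1)$-dimensional congruences, while faces in directions $j \in S\setminus Q$ can be analyzed by symmetry or a secondary induction on $|Q|$. Proposition \ref{prop:almostnconisncon} then requires only that transitivity hold in a single direction, which is inherited from the transitivity of $\Delta(\{\alpha_i\}_{i \in Q})$ as a $(|Q|)$-dimensional congruence. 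To see containment of generators, one unpacks a typical matrix from $M(\{\theta_k\}_{k\in S})$ into a $Q$-shaped arrangement of lower-dimensional matrices, each of which lies inside the appropriate $\Delta(\{\theta_j\}_{j \in \{i\} \cup S\setminus Q})$ and whose $i$-face belongs to $\alpha_i$; reassembling these pieces via $\glue_Q$ exhibits the generator as an element of the right-hand side.

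For the reverse inclusion $\glue_Q(\Delta(\{\alpha_i\}_{i\in Q})) \subseteq \Delta(\{\theta_k\}_{k\in S})$, I would argue from the minimality of $\Delta(\{\alpha_i\}_{i \in Q})$ among $(|Q|)$-dimensional congruences whose $i$-face contains $\alpha_i$. The relation $\Delta(\{\theta_k\}_{k\in S})$, viewed through the $Q$-sliced decomposition induced by $\glue_Q$, is itself an $(|S|)$-dimensional congruence whose $i$-slice (for $i \in Q$) contains $\alpha_i$---this is a consequence of the monotonicity of $\Delta$ in its arguments applied to the sub-tuple $\{\theta_j\}_{j\in\{i\}\cup S\setminus Q}$ of the full list $\{\theta_k\}_{k\in S}$, combined with the definition of $\alpha_i$ as a face of the smaller $\Delta$. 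The desired inclusion then follows by minimality.

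The main obstacle I anticipate is in the first containment, specifically the identification of $\faces_j(\glue_Q(\Delta(\{\alpha_i\}_{i \in Q})))$ for $j \in S\setminus Q$ as an $(|S|-1)$-dimensional congruence. This requires understanding how $\glue_Q$ interacts with taking a face in an orthogonal direction, and the careful book-keeping of indices through the recursion is where the combinatorial subtlety lies. Beyond that, Proposition \ref{prop:almostnconisncon} should shoulder most of the structural work, reducing the verification that the right-hand side is an $(|S|)$-dimensional congruence to checking transitivity in a single direction together with the congruence property of its faces.
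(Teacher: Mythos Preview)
Your architecture is the paper's: sandwich $M(\{\theta_k\}_{k\in S}) \subseteq \Delta' \subseteq \Delta$ and then certify that $\Delta' \coloneqq \glue_Q(\Delta(\{\alpha_i\}_{i\in Q}))$ is an $(|S|)$-dimensional congruence via Proposition~\ref{prop:almostnconisncon}. The reverse inclusion $\Delta' \subseteq \Delta$ by minimality is also essentially the paper's argument stated one level of abstraction higher; unpacking it amounts to showing that each generator $\cube_i(\zeta,\eta)$ of $M(\{\alpha_i\}_{i\in Q})$ lies in $\cut_Q(\Delta)$, which is exactly the computation the paper does with Lemma~2.14 and Corollary~2.5 of \cite{taylorsupnil}.

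There is, however, a real slip in your face analysis. For $i \in Q$ the face $\faces_i^0(\Delta')$ is \emph{not} $\alpha_i$: it equals $\glue_{Q\setminus\{i\}}\bigl(\Delta(\{\alpha_j\}_{j\in Q\setminus\{i\}})\bigr)$, and recognizing this as the $(|S|-1)$-dimensional congruence $\Delta(\{\theta_k\}_{k\in S\setminus\{i\}})$ already requires the inductive hypothesis of the theorem. (The object $\alpha_i$ is a congruence of $\Delta(\{\theta_j\}_{j\in S\setminus Q})$, hence a binary relation on $A^{2^{S\setminus Q}}$; it is not an $(|S|-1)$-dimensional relation on $A$ at all.) Consequently the obstacle you flagged for directions $j \in S\setminus Q$ is present for the $Q$-directions as well, and your proposed ``secondary induction on $|Q|$'' cannot close the argument: taking a face in a direction $j \in S\setminus Q$ leaves $|Q|$ unchanged. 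The paper instead runs a single induction on $|S|$, computing $\faces_k^0(\Delta')$ explicitly in both cases (for $k \in S\setminus Q$ one replaces each $\alpha_i$ by $\beta_i = \faces_k^0(\alpha_i)$) and invoking the inductive hypothesis to identify the result with $\Delta(\{\theta_l\}_{l\in S\setminus\{k\}})$. A smaller point: the claim that $\Delta'$ is an $(|S|)$-dimensional tolerance ``by construction'' also needs work in the $S\setminus Q$ directions, and the paper verifies reflexivity and symmetry there by hand on generators.
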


\begin{proof}
We should first explain how the theorem statement makes sense. First, notice that for each $i \in Q$,  
\[
\alpha_i \in 
\Con
\bigg(
\Delta
\left(
\{\theta_j\}_{j \in  S \setminus Q}
\right)
\bigg).
\]
It follows that 
\begin{align*}
\Delta(\{\alpha_i\}_{i \in Q}) &\subseteq
\left(
\Delta
\left(
\{\theta_j\}_{j \in  S \setminus Q}
\right)
\right)^Q
\subseteq (A^{2^{S \setminus Q}})^Q.
\end{align*}
Hence, $\Delta(\{\alpha_i\}_{i \in Q})$
is a subset of the domain of the mapping $\glue_{Q}$ and theorem's assertion contains no formal conflict. We also notice that if $|Q| \in \{ 0, 1, |S| \}$, then the theorem is trivial, and so we assume that we are not dealing with one of these cases. 

Set   
$\Delta = \Delta
\left(
\{ \theta_k \}_{k \in S}
\right)$ and 
$\Delta' = \glue_Q
\bigg(
\Delta
\left(
\{\alpha_i\}_{ i \in Q}
\right)
\bigg).
$
The result will follow after we notice that 
\[
M
\left(
\{\theta_k\}_{k \in S}
\right)
\subseteq
\Delta'
\subseteq \Delta
\]
and then demonstrate that $\Delta'$ is a $(|S|)$-dimensional congruence. Denote by $\cube(x)$ the vertex labeled cube with every vertex labeled by $x$. Note that the dimension of this cube depends on the context in which it is used. 

To show that 
$M
\left(
\{\theta_k\}_{k \in S}
\right)
\subseteq
\Delta'
$,
it is enough to see that $\Delta'$ contains its generating set. Let $k \in S$ and $\langle x,y \rangle \in \theta_k$. We have that 

\[
\cut_Q(\cube_k(x,y)) = 
\begin{cases}
\cube_k( \cube(x), \cube(y)) &\text{ if } k \in Q \\
\cube(\cube_k(x,y)) &\text{ if } k \in S \setminus Q.
\end{cases}
\]
In either case, $\cut_Q(\cube_k(x,y))$ is a generator of $\Delta\left( \{\alpha_i\})_{i \in Q} \right)$, hence $\cube_k(x,y) \in \Delta'$. 

To show that $\Delta' \subseteq \Delta$, we will demonstrate that 
$\glue_Q(M(\{\alpha_i\}_{i \in Q})) \subseteq \Delta$. This is adequate, because it has the consequence that
\[
\Delta' = \glue_Q(\tc
\left(
M(\{\alpha_i\}_{i \in Q})
\right)
\subseteq \tc
\left(
\glue_Q(
M(\{\alpha_i\}_{i \in Q})
)
\right)
\subseteq \Delta .
\]
Because term operations commute with $\glue_Q$, we need only show that $\glue_Q(\mu) \in \Delta$, where $\mu$ is a generator of $M(\{\alpha_i\}_{i \in Q})$. 
So, let $i \in Q$ and suppose $\mu = \cube_i( \langle \zeta, \eta \rangle)$, where $\zeta, \eta \in \Delta(\{\theta_j\}_{j \in S \setminus Q})$ and $\langle \zeta, \eta \rangle \in \alpha_i$. Now, Lemma 2.14 of \cite{taylorsupnil} indicates that $\cut_{Q\setminus \{i\}}(\Delta)_\textbf{1} = \Delta(\{\theta_j\}_{j \in \{i\} \union S\setminus Q})$. There is therefore a $\gamma \in \Delta$ satisfying 
$
\cut_{Q\setminus{\{i\}}}(\gamma)_\textbf{1}= \glue_{\{i\}}(\langle \zeta, \eta\rangle).
$
Applying Corollary 2.5 of \cite{taylorsupnil} to this situation produces a $\tau$ such that 
\[
\cut_{Q\setminus \{i\}}(\tau)_f = \glue_{\{i\}}(\langle \zeta, \eta\rangle)
\]
holds for every $f \in 2^{Q\setminus \{i\}}$. This exactly means that $\cut_Q(\gamma) = \mu$, or put another way, that $\glue_Q(\mu) = \gamma \in \Delta$. 

Now we apply Proposition \ref{prop:almostnconisncon} to show that $\Delta'$ is a $(|S|)$-dimensional congruence. Let us establish that the conditions are satisfied. We first show that $\Delta'$ is a $(|S|)$-dimensional tolerance. First, we note that $\Delta' \subseteq \tc(\glue_Q(M(\{\alpha_i\}_{i\in Q})))$, so it is enough to establish that $\glue_Q(M(\{\alpha_i\}_{i\in Q}))$ is a $(|S|)$-dimensional tolerance (this follows from Lemma 2.9 of \cite{taylorsupnil}).

Let $\gamma \in \Delta'$, $k \in S$ and $j \in 2$. We want to show that $\refl^j_k(\gamma), \sym_k(\gamma) \in \glue_Q(M(\{\alpha_i\}_{i\in Q}))$. It is sufficient to check this for those $\gamma = \glue_Q(\mu)$ for $\mu$ a generator. The case when $k \in Q$ is obvious, so suppose $k \in S\setminus Q$. Let $i \in Q$ and $\langle \zeta, \eta \rangle \in \alpha_i$ determine a generator $\mu =\cube_i(\zeta, \eta)$. It follows from the definition of $\alpha_i$ that $\langle \refl^j_k(\zeta), \refl^j_k(\eta) \rangle, \langle \sym_k(\zeta), \sym_k(\eta) \rangle  \in \alpha_i$, therefore $\mu^{r_j} = \cube_i(\refl^j_k(\zeta), \refl^j_k(\eta))$ and $\mu^s = \cube_i(\sym_k(\zeta), \sym_k(\eta) )$ are also generators. It is easy to see that 
$
\refl_k^j(\gamma) = \glue_Q(\mu^{r_j} )
$ and 
$
\sym(\gamma) = \glue_Q(\mu^s)$.

Recall that we are assuming that $|Q| \geq 2$. Therefore, there exists $i \in Q$. Evidently, $\faces_i(\Delta')$ is a congruence. So, to finish the proof we need to show that 
$
\faces_k^0(\Delta')
$
is a $(|S|-1)$-dimensional congruence, for each $k \in S$. Here we proceed by induction, supposing that the theorem we are proving is true for indexing sets with cardinality strictly less than $S$. 

Take $k \in S$. 
If $k \in Q$, we compute

\begin{align*}
\faces_k^0(\Delta') &=
\faces_k^0(\glue_Q(\Delta( \{\alpha_i\}_{i \in Q}))) \\
&= \glue_{Q \setminus \{k\}}(\faces_k^0(\Delta(\{\alpha_i\}_{i \in Q})))\\
&= \glue_{Q \setminus \{k\}}(\Delta(\{\alpha_i\}_{i \in Q\setminus \{k\}}))\\
&=_{\text{(inductive assumption)}} \Delta(\{\theta_i\}_{i \in Q \setminus \{k\}}).
\end{align*}
Now suppose $k \in S \setminus Q$.  For each $i \in Q$, set 
$\beta_i = 
\faces_i
\bigg(
\Delta
\left(
\{\theta_j\}_{j \in \{i\} \union S \setminus 
(Q \union \{k\})}
\right)
\bigg)
$
Abusing notation slightly, we have that $\beta_i = \faces_k^0(\alpha_i)$. A straightforward induction on the number of transitive closures taken over $Q$ of $\glue_Q(M(\{\alpha_i\}_{i\in Q}))$ shows that 
\begin{align*}
\faces_k^0(\Delta') &= \faces_k^0(\glue_Q(\Delta( \{\alpha_i\}_{i \in Q})))\\
&= \glue_Q(\Delta( \{\faces_k^0(\alpha_i)\}_{i \in Q}))\\
&= \glue_Q(\Delta( \{\beta_i\}_{i \in Q}))\\
&=_{\text{(inductive assumption)}} \Delta(\{\theta_i\}_{i \in Q \setminus \{k\}}).
\end{align*}

\end{proof}

\section{Higher dimensional Kiss terms}\label{sec:hdkt}

In \cite{threeremarks}, Kiss demonstrates that every modular variety $\var$ has a certain $4$-ary term which one may use to obtain from an arbitrary element of $\rect(\alpha, \beta)$ an element of $\Delta(\alpha, \beta)$, for any $\A \in \var$ and $\alpha, \beta \in\Con(\A)$. A Kiss term therefore has a close connection to the behaviour of the modular binary commutator. In this section we derive from the Kiss term a sequence of `higher dimensional' Kiss terms, each of which has a close connection to the behavior of a higher arity commutator. 

\begin{defn}[see 3.3 of \cite{threeremarks}]\label{def:4differenceterm} 
A four variable term $q_2$ is said to be a $(2)$-dimensional Kiss term for a variety $\var$ if 
\begin{enumerate}
\item $q_2(x,x,y,y) \approx q_2(x,y,x,y) \approx y$ holds in $\var$, and 
\item for all $\A \in \var$ and $\alpha, \beta \in \Con(\A)$, if 
$
\Square[a][c][b][d][.5], \Square[a][c][b][d'][.5] \in \rect(\alpha, \beta),
$
then $\langle q_2(a,b,c,d), q_2(a,b,c,d') \rangle \in [ \alpha, \beta]_{TC} $.
\end{enumerate}
\end{defn}

The following proposition states an important feature of a $(2)$-dimensional Kiss term for a modular variety. 
\begin{prop}[see 3.8 of \cite{threeremarks}]\label{prop:twokisscompletes}
Let $\var$ be a modular variety with a $(2)$-dimensional Kiss term $q_2$. Let $\A \in \var$ and $\alpha, \beta \in \Con(\A)$. If $\Square[a][c][b][d][.5] \in \rect(\alpha, \beta)$, then 
\[
\Square[a][c][b][q_2(a,b,c,d)] \in \Delta(\alpha, \beta).
\]
\end{prop}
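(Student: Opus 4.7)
The plan is to argue in two stages. Throughout, set $e := q_2(a,b,c,d)$.

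\textbf{Stage 1: $\Square[a][c][b][e] \in \rect(\alpha,\beta)$.} This reduces to verifying the two new edges $(c,e) \in \alpha$ and $(b,e) \in \beta$. I work in the two quotients. Modulo $\alpha$: since $(a,b), (c,d) \in \alpha$, the identity $q_2(x,x,y,y) \approx y$ from \emph{(1)} of Definition \ref{def:4differenceterm} gives $e = q_2(a,b,c,d) \equiv q_2(a,a,c,c) = c$, so $(c,e) \in \alpha$. Modulo $\beta$: since $(a,c), (b,d) \in \beta$, the identity $q_2(x,y,x,y) \approx y$ gives $e \equiv q_2(a,b,a,b) = b$, so $(b,e) \in \beta$. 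Together with the two pairs already in $\rect(\alpha,\beta)$, this yields $\Square[a][c][b][e] \in \rect(\alpha,\beta)$.

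\textbf{Stage 2: $\Square[a][c][b][e] \in \Delta(\alpha,\beta)$.} Apply \emph{(2)} of Definition \ref{def:4differenceterm} to the rectangles $\Square[a][c][b][d]$ and $\Square[a][c][b][e]$ (both in $\rect(\alpha,\beta)$ thanks to Stage 1) to obtain
\[
\langle e, q_2(a,b,c,e) \rangle \in [\alpha,\beta]_{TC}.
\]
By Theorem \ref{thm:tc=hhcinmodular}, $[\alpha,\beta]_{TC}$ coincides with the $(2)$-ary hypercommutator, so this pair is realized as a diagonal collapse inside $\Delta(\alpha,\beta)$. Combining this collapse with the generators of $\Delta(\alpha,\beta)$ built from the pairs $(a,b), (c,e) \in \alpha$ and $(a,c), (b,e) \in \beta$---all now available thanks to Stage 1---and exploiting the closure of $\Delta(\alpha,\beta)$ under term operations and under transitive composition in both coordinate directions, I assemble a finite chain of $\Delta$-squares terminating at $\Square[a][c][b][e]$.

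The main obstacle is Stage 2: the Kiss axiom supplies only commutator-level information between $q_2$-values, and converting this into an explicit $\Delta$-membership requires a careful combinatorial argument using the $(2)$-dimensional congruence structure. Concretely, the chain must be organized so that at each step the next square is built from earlier ones via a transitive composition or term-operation step, with the diagonal collapse produced by the commutator absorbed appropriately. If the direct composition proves delicate, a back-up route is to translate to the FM congruence $\Delta_{\beta,\alpha}$ on $\beta$ (as discussed at the start of this subsection) and argue at the level of $\beta$-pairs that $(\langle a,c \rangle, \langle b,e \rangle)$ can be derived from the diagonal generators $(\langle x,x \rangle, \langle y,y \rangle)$ for $(x,y) \in \alpha$ by coordinate-wise application of the Kiss axiom.
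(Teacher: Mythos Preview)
The paper does not prove this proposition; it is imported from Kiss \cite{threeremarks} (his 3.8) and used as a black box, notably as the base case $n=2$ of Theorem~\ref{thm:higherkisscompletes}. So there is no in-paper argument to compare against.

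On your proposal itself: Stage~1 is correct. Stage~2 is not a proof but a plan---you say as much (``the main obstacle is Stage~2'')---and the specific route you sketch does not close. From axiom \emph{(2)} of Definition~\ref{def:4differenceterm} you extract $\langle e, f\rangle \in [\alpha,\beta]$ with $f = q_2(a,b,c,e)$, hence $\com_2(f,e) \in \Delta(\alpha,\beta)$. To finish via Lemma~\ref{lem:deltashifting} you would need $\Square[a][c][b][f] \in \Delta(\alpha,\beta)$, but that is the same statement with $e$ replaced by $f$; iterating yields an infinite regress, not a derivation. Axiom \emph{(2)} only relates two $q_2$-outputs to each other modulo the commutator; it never, on its own, certifies that any particular square lies in $\Delta$. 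You need an anchor square already known to lie in $\Delta$ and sharing the three corners $a,b,c$, and the abstract axioms of Definition~\ref{def:4differenceterm} do not hand you one.

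Kiss's own argument does not proceed from the axioms in Definition~\ref{def:4differenceterm}; rather, he constructs $q_2$ explicitly from Gumm/Day terms and verifies the completion property directly from that construction together with the FM description of $\Delta_{\beta,\alpha}$ as a congruence of the algebra $\beta$. Your ``back-up route'' points in this direction but is not carried out. If you want a self-contained proof here, that is the path: fix a concrete $q_2$ built from the Day terms $m_0,\dots,m_k$ and verify membership in $\Delta$ by a direct term computation in the spirit of Lemma~\ref{lem:almost2transitivegivestransitive}.
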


Before we go any further, we point out again that the definitions of $\Delta(\alpha, \beta)$ given in \cite{FM} and \cite{taylorsupnil} coincide in the binary modular case. Our use of the higher dimensional Kiss terms makes frequent use of the fact that $\Delta(\alpha, \beta)$ is a $(2)$-dimensional congruence. In particular, the construction relies on the following lemma. 

\begin{lem}\label{lem:twokissmasterlemma}
Let $\var$ be a modular variety with a $(2)$-dimensional Kiss term $q_2$. Let $\A \in \var$ and let $\alpha, \beta \in \Con(\A)$. If $x,y,u,v,x',y',v' \in A$ are such that 
\[
\Square[x][u][y][v][.6] \in \rect(\alpha, \beta)
\qquad \text{and} \qquad
\Square[x][u][y'][v'][.6] \in \Delta(\alpha, \beta),
\]

\[
\text{then }
\begin{tikzpicture}
    [ baseline=(center.base), font=\small,
      every node/.style={inner sep=0.25em}, scale=1 ]
    \node at (0.5,-0.5) (center) {\phantom{$\cdot$}}; % phantom = black magic
    \path (0,0)   node (nw) {$u$}
      -- ++(1.5,0)  node (ne) {$q_2(y',y,v',v)$}
      -- ++(0,-1) node (se) {$y$}
      -- ++(-1.5,0) node (sw) {$x$};
    \draw (nw) -- (ne) -- (se) -- (sw) -- (nw);
    
    \end{tikzpicture}
\in \Delta(\alpha, \beta).
\]
\end{lem}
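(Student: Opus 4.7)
The plan is to obtain the desired $\Delta(\alpha, \beta)$-square by gluing two squares in $\Delta(\alpha, \beta)$ along a common column, exploiting the fact that a $(2)$-dimensional congruence has a transitive face relation.

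First I would observe that, since both hypothesis squares lie in $\rect(\alpha, \beta)$, their bottom rows yield $(x, y), (x, y') \in \alpha$ and their top rows yield $(u, v), (u, v') \in \alpha$; transitivity of $\alpha$ then gives $(y, y'), (v, v') \in \alpha$. Consequently the square $\Square[y'][v'][y][v]$, whose rows are the $\alpha$-pairs $(y', y)$ and $(v', v)$ and whose columns are the $\beta$-pairs $(y', v')$ and $(y, v)$, belongs to $\rect(\alpha, \beta)$.

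Next I would apply Proposition \ref{prop:twokisscompletes} to this rectangle with parameters $a = y'$, $b = y$, $c = v'$, $d = v$ to conclude that $\Square[y'][v'][y][q_2(y', y, v', v)] \in \Delta(\alpha, \beta)$. This is the crux of the argument: the four arguments of the target term $q_2(y', y, v', v)$ are arranged as the sw-se-nw-ne entries of $\Square[y'][v'][y][v]$, which is exactly the pattern required by Proposition \ref{prop:twokisscompletes}.

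Finally I would glue this new $\Delta$-square to the hypothesis square $\Square[x][u][y'][v'] \in \Delta(\alpha, \beta)$ along their common column $(y', v')$. Since $\Delta(\alpha, \beta)$ is a $(2)$-dimensional congruence, the face relation $\faces_0(\Delta(\alpha, \beta))$ is a congruence on $\beta$ and in particular is transitive; transitivity through the intermediate column $(y', v')$ yields
\[
(x, u) \; \faces_0(\Delta(\alpha, \beta)) \; (y, q_2(y', y, v', v)),
\]
which is exactly the claimed membership $\Square[x][u][y][q_2(y', y, v', v)] \in \Delta(\alpha, \beta)$. I do not anticipate any substantive obstacle; the only point requiring care is the bookkeeping of which entry of the auxiliary rectangle plays which role of $q_2$ when invoking Proposition \ref{prop:twokisscompletes}. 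Once this match is spotted, the conclusion follows from a single transitivity step.
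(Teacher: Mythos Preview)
Your proposal is correct and follows essentially the same route as the paper: show that $\Square[y'][v'][y][v]\in\rect(\alpha,\beta)$, apply Proposition~\ref{prop:twokisscompletes} to obtain $\Square[y'][v'][y][q_2(y',y,v',v)]\in\Delta(\alpha,\beta)$, and then use the $(2)$-transitivity of $\Delta(\alpha,\beta)$ to glue this onto the hypothesis square $\Square[x][u][y'][v']$. The paper is terser about verifying that the auxiliary square lies in $\rect(\alpha,\beta)$, but otherwise the arguments coincide.
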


\begin{proof}
The assumptions imply that 
\[
\Square[y'][v'][y][v][.6] \in \rect(\alpha, \beta),
\]
so it follows from Proposition \ref{prop:twokisscompletes} that 
\[
\begin{tikzpicture}
    [ baseline=(center.base), font=\small,
      every node/.style={inner sep=0.25em}, scale=1 ]
    \node at (0.5,-0.5) (center) {\phantom{$\cdot$}}; % phantom = black magic
    \path (0,0)   node (nw) {$v'$}
      -- ++(1.5,0)  node (ne) {$p(y',y,v',v)$}
      -- ++(0,-1) node (se) {$y$}
      -- ++(-1.5,0) node (sw) {$y'$};
    \draw (nw) -- (ne) -- (se) -- (sw) -- (nw);
    
    \end{tikzpicture}
\in \Delta(\alpha, \beta).
\]
Because $\Delta(\alpha, \beta)$ is $(2)$-transitive, the result follows.
\end{proof}

\begin{defn}[Higher dimensional Kiss terms]\label{def:higherkissterms}
Let $\var$ be a modular variety with a $(2)$-dimensional Kiss term $q_2$. For each $n \geq 3$, recursively define a $2^n$-ary term $q_n$ as
\[
q_n \coloneqq q_2 \big( 
q_{n-1}(x_0, \dots, x_{2^{n-1}-1}), x_{2^{n-1}-1},
q_{n-1}( x_{2^{n-1}}, \dots, x_{2^n -1}) , x_{2^n -1} 
\big)
\]
\end{defn}

We order the functions belonging to $2^n$ colexicographically in what follows. That is, we write $f < g$ if $f_i < g_i$, where $i$ is the greatest input on which $f$ and $g$ differ. 
The remainder of this section is devoted to proving the following theorem. 

\begin{thm}\label{thm:higherkisscompletes}
Let $\var$ be a modular variety with higher dimensional Kiss terms $q_n$ for $n \geq 2$. Let $\A \in \var$, $n \geq 2$, and $(\theta_0, \dots, \theta_{n-1}) \in \Con(\A)^n$. For $\gamma \in A^{2^n}$, let $\widehat{\gamma} \in A^{2^n}$ be defined as 
\[
\widehat{\gamma}_f = 
\begin{cases}\gamma_f \text{ if } f \neq \textbf{1}\\
q_n(\gamma_\textbf{0}, \dots, \gamma_f, \dots , \gamma_\textbf{1}) \text{ if } 
f = \textbf{1},
\end{cases}
\] 
where the $\gamma_f$ are ordered according to the colexicographical ordering on $2^n$.

If $\gamma \in A^{2^n}$ has the property that $\faces_i^j(\gamma) \in 
\Delta
\left( \{\theta_k\}_{k \in n\setminus \{i\}}
\right) $ for every $i \in n$ and $j \in 2$, then $\widehat{\gamma} \in \Delta(\theta_0, \dots, \theta_{n-1})$.
\end{thm}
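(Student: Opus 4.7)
The plan is induction on $n$. The base case $n = 2$ follows immediately from Proposition \ref{prop:twokisscompletes}: under the hypothesis, each one-dimensional face of $\gamma$ is a pair in the appropriate $\theta_k$, so $\gamma \in \rect(\theta_0, \theta_1)$, and the proposition delivers $\widehat\gamma \in \Delta(\theta_0, \theta_1)$.

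For the inductive step, set $\gamma^j = \faces_{n-1}^j(\gamma)$ for $j \in 2$. The hypothesis with $i = n-1$ gives $\gamma^0, \gamma^1 \in \Delta(\theta_0, \ldots, \theta_{n-2})$. For $i < n-1$, the commutation $\faces_i^j(\gamma^0) = \faces_{n-1}^0(\faces_i^j(\gamma))$ together with the fact that faces of $\Delta$-cubes are $\Delta$-cubes in the appropriate congruence tuple shows that both $\gamma^0$ and $\gamma^1$ satisfy the inductive hypothesis in dimension $n-1$. Applying induction yields $\widehat{\gamma^0}, \widehat{\gamma^1} \in \Delta(\theta_0, \ldots, \theta_{n-2})$. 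Writing $a = \widehat{\gamma^0}_\mathbf{1} = q_{n-1}(\gamma^0)$, $b = \gamma^0_\mathbf{1}$, $c = \widehat{\gamma^1}_\mathbf{1} = q_{n-1}(\gamma^1)$, and $d = \gamma_\mathbf{1}$, Definition \ref{def:higherkissterms} unfolds to $q_n(\gamma) = q_2(a,b,c,d)$, the top vertex of $\widehat\gamma$.

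The first key observation is that $\nu := \faces_{n-1}^1(\widehat\gamma)$, the top $(n-1)$-face of $\widehat\gamma$ in direction $n-1$, admits the pointwise expression
\[
\nu = q_2(\widehat{\gamma^0},\, \gamma^0,\, \widehat{\gamma^1},\, \gamma^1).
\]
This is checked vertex by vertex: for $f \neq \mathbf{1}$ we have $\widehat{\gamma^j}_f = \gamma^j_f$, so the identity $q_2(x,x,y,y) = y$ yields $\gamma^1_f = \nu_f$; at $f = \mathbf{1}$, we get $q_2(a,b,c,d) = q_n(\gamma) = \nu_\mathbf{1}$. Since $\Delta(\theta_0, \ldots, \theta_{n-2})$ is closed under term operations, $\nu \in \Delta(\theta_0, \ldots, \theta_{n-2})$. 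The second observation is that the $2$-dimensional square $\Square[a][c][b][d]$ at the top corners lies in $\rect(\alpha, \theta_{n-1})$, where $\alpha = \bigcap_{i < n-1}\theta_i$: the columns $\langle a, c \rangle$ and $\langle b, d \rangle$ are $\theta_{n-1}$-pairs (the former because $\gamma^0,\gamma^1$ are componentwise $\theta_{n-1}$-related and $q_{n-1}$ is a term operation, the latter as a top $(n-1)$-edge of $\gamma$); the rows $\langle a, b \rangle$ and $\langle c, d \rangle$ lie in each $\theta_i$ for $i < n-1$ by transitivity along the $i$-edges at the top of the $\Delta$-cubes $\widehat{\gamma^0}, \gamma^0$ (and $\widehat{\gamma^1}, \gamma^1$), which share the adjacent vertex $\gamma^j_{\mathbf{1} - e_i}$.

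The main obstacle is to promote these observations to the full $(n)$-dimensional statement that $\widehat\gamma = \glue_{\{n-1\}}(\gamma^0, \nu) \in \Delta(\theta_0, \ldots, \theta_{n-1})$. My plan is to reduce to a $2$-dimensional $\Delta$-membership via Theorem \ref{thm:deltaofdeltaisdelta}: applied with $Q = \{0, n-1\}$ it identifies $\Delta(\theta_0, \ldots, \theta_{n-1})$ with $\glue_{\{0, n-1\}}\!\bigl(\Delta(\alpha_0, \alpha_{n-1})\bigr)$ for appropriate face congruences $\alpha_0, \alpha_{n-1}$ taken from lower-dimensional $\Delta$'s. The task then becomes verifying $\cut_{\{0, n-1\}}(\widehat\gamma) \in \Delta(\alpha_0, \alpha_{n-1})$, a $2$-dimensional assertion in an algebra of $(n-2)$-cubes. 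The three corners of $\cut_{\{0,n-1\}}(\widehat\gamma)$ not touching the vertex $\mathbf{1}$ agree with those of $\cut_{\{0,n-1\}}(\gamma)$ and are seen to satisfy the appropriate $\alpha_0$- and $\alpha_{n-1}$-relations by applying $\faces_0$ or $\faces_{n-1}$ to $\gamma^0, \gamma^1$ and to the codimension-one faces of $\gamma$ in directions $0$ and $n-1$. The fourth, modified corner is precisely the $(n-2)$-cube whose top vertex has been altered from $d$ to $q_2(a,b,c,d)$, and the Kiss rectangle established above at $\Square[a][c][b][d]$ is exactly the top-vertex instance of the $\rect(\alpha_0, \alpha_{n-1})$-square required by Proposition \ref{prop:twokisscompletes} applied in this reduced $2$-dimensional setting, which then yields the desired $\Delta(\alpha_0, \alpha_{n-1})$-membership and closes the induction.
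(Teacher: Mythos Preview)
Your inductive setup and first observation are correct: the identity $\nu = q_2(\widehat{\gamma^0},\gamma^0,\widehat{\gamma^1},\gamma^1)$ holds and places $\nu$ in $\Delta(\theta_0,\dots,\theta_{n-2})$. The gap is in the final step. Writing $X,Y,U,V$ for the four $(n-2)$-cube corners of $\cut_{\{0,n-1\}}(\gamma)$ and $V'$ for the modified corner of $\cut_{\{0,n-1\}}(\widehat\gamma)$, you correctly note that $V'$ agrees with $V$ except at its top vertex, where $d$ is replaced by $q_2(a,b,c,d)$. But Proposition~\ref{prop:twokisscompletes} applied to $\Square[X][U][Y][V]\in\rect(\alpha_0,\alpha_{n-1})$ produces a square whose fourth corner is the \emph{pointwise} cube $q_2(X,Y,U,V)$, and at a coordinate $g\neq\mathbf{1}$ this equals $q_2(X_g,Y_g,U_g,V_g)$, which no Kiss identity collapses to $V_g$: the four entries are the four vertices of the $(0,n-1)$-square of $\gamma$ at $g$, with no repetitions. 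Hence $q_2(X,Y,U,V)\neq V'$ in general, and Proposition~\ref{prop:twokisscompletes} alone does not yield $\cut_{\{0,n-1\}}(\widehat\gamma)\in\Delta(\alpha_0,\alpha_{n-1})$. (Your ``top-vertex instance'' remark is also off: $X_{\mathbf{1}}$ and $U_{\mathbf{1}}$ are vertices of $\gamma$, not the terms $a=q_{n-1}(\gamma^0)$ and $c=q_{n-1}(\gamma^1)$.)

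The paper closes exactly this gap by using Lemma~\ref{lem:twokissmasterlemma} rather than Proposition~\ref{prop:twokisscompletes} in the last step; that lemma takes as input both a rectangle \emph{and} a $\Delta$-element sharing an L-shape, and outputs a fourth corner of the form $q_2(\mathbf{y'},\mathbf{y},\mathbf{v'},\mathbf{v})$. The needed $\Delta$-element is an auxiliary $\gamma'\in\Delta(\theta_0,\dots,\theta_{n-1})$ that agrees with $\gamma$ everywhere except at the two vertices $(1,\dots,1,0)$ and $(1,\dots,1,1)$, where it carries $q_{n-1}(\gamma^0)$ and $q_{n-1}(\gamma^1)$. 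With this in hand, at every $g\neq\mathbf{1}$ one has $\mathbf{y'}_g=\mathbf{y}_g$ and $\mathbf{v'}_g=\mathbf{v}_g$, so the identity $q_2(x,x,y,y)=y$ gives the cancellation you were missing. The paper builds $\gamma'$ by applying the inductive hypothesis not to the two faces $\gamma^0,\gamma^1$ separately, but to the single $(n-1)$-cube $\mu=\lines_{n-1}(\gamma)$ over the algebra of $\theta_{n-1}$-pairs (invoking Theorem~\ref{thm:deltaofdeltaisdelta} with $Q=n-1$); note that $\gamma'=\glue_{\{n-1\}}(\widehat{\gamma^0},\widehat{\gamma^1})$, so your induction on the faces computes the right object but does not by itself place it in the $n$-dimensional $\Delta$.
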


\begin{proof}[Proof when $n=3$]

Notice that the $n=2$ case of Theorem \ref{thm:higherkisscompletes} is handled by Proposition \ref{prop:twokisscompletes}. The proof of this special case is included to illustrate the proof of the general case. So, suppose that we are in an $n=3$ situation and that

\[
\gamma  = \Cube[a][c][b][d][e][g][f][h] \in A^{2^3}
\]
satisfies the condition in the theorem statement.

Set $\beta_0 = \faces_0(\Delta(\theta_0, \theta_2))$ and $\beta_1= \faces_1(\Delta(\theta_1, \theta_2))$. Our assumption about $\gamma$ implies that 

\[
\cut_{\{0,1\}} (\gamma) = 
\begin{tikzpicture}
    [ baseline=(center.base), font=\small,
      every node/.style={inner sep=0.02em}, scale=1.2 ]
    \node at (0.5,-0.5) (center) {\phantom{$\cdot$}}; % phantom = black magic
    \path (0,0)   node (1nwA) {$c$}
      -- ++(1.3,0)  node (1neA) {$d$}
      -- ++(0,-1) node (1seA) {$b$}
      -- ++(-1.3,0) node (1swA) {$a$};
    \path (.5,-.5)   node (0nwA) {$g$}
      -- ++(1.3,0)  node (0neA) {$h$}
      -- ++(0,-1) node (0seA) {$f$}
      -- ++(-1.3,0) node (0swA) {$e$};
     \draw (0nwA) -- node (c) {} (1nwA);
     \draw (0swA) -- node (d) {}(1swA);
     \draw (0neA) -- node (a) {} (1neA);
     \draw (0seA) -- node (b) {} (1seA);
     \draw (a) -- (b) -- (d) -- (c) -- (a);
    \end{tikzpicture}
\in \rect(\beta_0, \beta_1).
\]
We apply Lemma \ref{lem:twokissmasterlemma} and obtain 
$
\begin{tikzpicture}
    [ baseline=(center.base), font=\small,
      every node/.style={inner sep=0.02em}, scale=1.2 ]
    \node at (0.5,-0.5) (center) {\phantom{$\cdot$}}; % phantom = black magic
    \path (0,0)   node (1nwA) {$c$}
      -- ++(1.3,0)  node (1neA) {$q_2(a,b,c,d)$}
      -- ++(0,-1) node (1seA) {$b$}
      -- ++(-1.3,0) node (1swA) {$a$};
    \path (.5,-.5)   node (0nwA) {$g$}
      -- ++(1.3,0)  node (0neA) {$q_2(e,f,g,h)$}
      -- ++(0,-1) node (0seA) {$f$}
      -- ++(-1.3,0) node (0swA) {$e$};
     \draw (0nwA) -- node (c) {} (1nwA);
     \draw (0swA) -- node (d) {}(1swA);
     \draw (0neA) -- node (a) {} (1neA);
     \draw (0seA) -- node (b) {} (1seA);
     \draw (a) -- (b) -- (d) -- (c) -- (a);
    \end{tikzpicture}
\in \Delta(\beta_0, \beta_1).
$
Theorem \ref{thm:deltaofdeltaisdelta} indicates that 
$
\glue_{\{0,1\}}\left(\Delta(\beta_0, \beta_1)\right) = 
\Delta(\theta_0, \theta_1, \theta_2),
$
so we conclude that 
\[
\gamma' = 
\Cube[a][c][b][q_2(a,b,c,d)][e][g][f][q_2(e,f,g,h)][1.2]
\in \Delta(\theta_0, \theta_1, \theta_2).
\]

Set $\alpha_1= \faces_1\left(\Delta(\theta_0, \theta_1)\right)$ and $\alpha_2= \faces_2\left(\Delta(\theta_0,\theta_2)\right)$. Applying the mapping $\cut_{\{1,2\}}$ to $\gamma$ and $\gamma'$ respectively produces the following labeled squares, each belonging to the universe of the indicated algebra:
\[ 
\begin{tikzpicture}
    [ baseline=(center.base), font=\small,
      every node/.style={inner sep=0.02em}, scale=1.2 ]
    \node at (0.5,-0.5) (center) {\phantom{$\cdot$}}; % phantom = black magic
    \path (0,0)   node (1nwA) {$c$}
      -- ++(1.3,0)  node (1neA) {$d$}
      -- ++(0,-1) node (1seA) {$b$}
      -- ++(-1.3,0) node (1swA) {$a$};
    \path (.5,-.5)   node (0nwA) {$g$}
      -- ++(1.3,0)  node (0neA) {$h$}
      -- ++(0,-1) node (0seA) {$f$}
      -- ++(-1.3,0) node (0swA) {$e$};
    \node at (.65,0) (a) {\bf{y}};
    \node at (.65,-1) (b) {\bf{x}};
    \node at (1.15,-.5) (c) {\bf{v}};
    \node at (1.15,-1.5) (d) {\bf{u}};
     \draw (0nwA) -- (c) -- (0neA);
     \draw (0swA) -- (d) -- (0seA);
     \draw (1nwA) -- (a) -- (1neA);
     \draw (1swA) -- (b) -- (1seA);
     \draw (a) -- (b) -- (d) -- (c) -- (a);
    \end{tikzpicture}
\in \rect
\left(
\alpha_1, \alpha_2
\right)
\text{ and }
\begin{tikzpicture}
    [ baseline=(center.base), font=\small,
      every node/.style={inner sep=0.02em}, scale=1.2 ]
    \node at (0.5,-0.5) (center) {\phantom{$\cdot$}}; % phantom = black magic
    \path (0,0)   node (1nwA) {$c$}
      -- ++(1.6,0)  node (1neA) {$q_2(a,b,c,d)$}
      -- ++(0,-1) node (1seA) {$b$}
      -- ++(-1.6,0) node (1swA) {$a$};
    \path (.5,-.5)   node (0nwA) {$g$}
      -- ++(1.6,0)  node (0neA) {$q_2(e,f,g,h)$}
      -- ++(0,-1) node (0seA) {$f$}
      -- ++(-1.6,0) node (0swA) {$e$};
    \node at (.7,0) (a) {\bf{y'}};
    \node at (.7,-1) (b) {\bf{x}};
    \node at (1.2,-.5) (c) {\bf{v'}};
    \node at (1.2,-1.5) (d) {\bf{u}};
     \draw (0nwA) -- (c) -- (0neA);
     \draw (0swA) -- (d) -- (0seA);
     \draw (1nwA) -- (a) -- (1neA);
     \draw (1swA) -- (b) -- (1seA);
     \draw (a) -- (b) -- (d) -- (c) -- (a);
    \end{tikzpicture}
\in \Delta(\alpha_1, \alpha_2).
\]

Indeed, the truth of the first membership follows from our assumption about $\gamma$ and the truth of the second membership follows from Theorem \ref{thm:deltaofdeltaisdelta}. The vertex labels of the above labeled squares have been named so that we may apply Lemma \ref{lem:twokissmasterlemma}, and we conclude that
\[
\begin{tikzpicture}
    [ baseline=(center.base), font=\small,
      every node/.style={inner sep=0.02em}, scale=1.2 ]
    \node at (0.5,-0.5) (center) {\phantom{$\cdot$}}; % phantom = black magic
    \path (0,.1)   node (1nwA) {$c$}
      -- ++(3,0)  node (1neA) {$d$}
      -- ++(0,-1) node (1seA) {$b$}
      -- ++(-3,0) node (1swA) {$a$};
    \path (.5,-.4)   node (0nwA) {$q_2(g,g,g,g)=g$}
      -- ++(3,0)  node (0neA) {$q_3(a,b,c,d,e,f,g,h)$}
      -- ++(0,-1) node (0seA) {$f$}
      -- ++(-3,0) node (0swA) {$e$};
    \node at (1.5,.1) (a) {};
    \node at (1.5,-.9) (b) {};
    \node at (2,-.4) (c) {};
    \node at (2,-1.4) (d) {};
     \draw (0nwA) -- (c) -- (0neA);
     \draw (0swA) -- (d) -- (0seA);
     \draw (1nwA) -- (a) -- (1neA);
     \draw (1swA) -- (b) -- (1seA);
     \draw (a) -- (b) -- (d) -- (c) -- (a);
    \end{tikzpicture}
 \in \Delta (\alpha_1, \alpha_2 ).
\]
A final application of Theorem \ref{thm:deltaofdeltaisdelta} finishes the proof of Theorem \ref{thm:higherkisscompletes} for the case $n=3$.

\end{proof}

\begin{proof}[Proof of general case]
Let $n \geq 3$ and suppose that Theorem \ref{thm:higherkisscompletes} holds for $q_{n-1}$. Let $\gamma \in A^{2^n}$ satisfy the conditions of the theorem statement. For each $i \in n-1$, set $\beta_i = \faces_i(\Delta(\theta_i, \theta_{n-1}))$. Let $i \in n-1$. We assume that 
$
\faces_i^j(\gamma) \in \Delta\left( \{\theta_k\}_{k \in n\setminus \{i\}} \right),
$
for each $j \in 2$.
We apply the $\lines_{n-1}$ map to each side of this and conclude that
\[
\lines_{n-1}\left(
\faces_i^j(\gamma)
\right)
=
\faces_i^j
\left(
\lines_{n-1}(\gamma)
\right)
\in \lines_{n-1}
\left(
\Delta\left( \{\theta_k\}_{k \in n\setminus \{i\}} \right)
\right).
\]
We now use Theorem \ref{thm:deltaofdeltaisdelta} and the fact that $\glue_{n-1}$ is the inverse of $\lines_{n-1}$ to see that
\begin{align*}
\lines_{n-1}
\left(
\Delta\left( \{\theta_k\}_{k \in n\setminus \{i\}} \right)
\right)
&= \lines_{n-1}
\left(
\glue_{n-1}
\left( 
\Delta
\left( \{\beta_k\}_{k \in n-1\setminus \{i\}}
\right)
\right)
\right)\\
&= 
\Delta
\left( \{\beta_k\}_{k \in n-1\setminus \{i\}}
\right). 
\end{align*}
Set $\mu = \lines_{n-1}(\gamma)$. We have just shown that $\mu$ satisfies the assumption of the theorem we are proving for the lower dimension of $n-1$, and we inductively assume the theorem to hold in this case. There therefore exists $\widehat{\mu}$ as in the theorem statement such that
$
\widehat{\mu} \in \Delta
\left( \{\beta_k\}_{k \in n-1}
\right).
$
Set $\gamma' = \glue_{n-1}(\widehat{\mu})$. We apply Theorem \ref{thm:deltaofdeltaisdelta} once again and conclude that 
$
\gamma' \in \Delta(\theta_0, \dots, \theta_{n-1}).
$

It follows from the definition of $\widehat{\mu}$ that 
\[
\gamma'_f= 
\begin{cases}
 q_{n-1}
\big(
\underbrace{
\faces_{n-1}^0(\gamma)_\textbf{0}, \dots, \faces_{n-1}^0(\gamma)_g, \dots,
\faces_{n-1}^0(\gamma)_\textbf{1}
}_{\text{colex ordering on $g \in 2^{n-1}$}}
\big) 
\text{ if } f = (1, \dots, 1,0), \\
 q_{n-1}
\big(
\underbrace{
\faces_{n-1}^1(\gamma)_\textbf{0}, \dots, \faces_{n-1}^0(\gamma)_g, \dots,
\faces_{n-1}^1(\gamma)_\textbf{1}
}_{\text{colex ordering on $g \in 2^{n-1}$}}
\big) 
\text{ if } f=(1, \dots, 1,1), \\
\gamma_f \text{ otherwise.}
\end{cases}
\]
Set $\zeta = \cut_{\{n-2, n-1\}}(\gamma) $ and $\zeta' = \cut_{\{n-2,n-1\}}(\gamma').$
We now have 

\[
\zeta= \Square[\textbf{x}][\textbf{u}][\textbf{y}][\textbf{v}]
\text{ and }
\zeta' = \Square[\textbf{x}][\textbf{u}][\textbf{y'}][\textbf{v'}]
\]
where 
\begin{align*}
\textbf{x} &= \cut_{\{n-2, n-1\}}(\gamma)_{\{\langle n-2, 0\rangle, \langle n-1, 0 \rangle \}},
 \\
\textbf{y} &= \cut_{\{n-2, n-1\}}(\gamma)_{\{\langle n-2, 1\rangle, \langle n-1, 0 \rangle \}},
&&\textbf{y'} = \cut_{\{n-2, n-1\}}(\gamma')_{\{\langle n-2, 1\rangle, \langle n-1, 0 \rangle \}}, \\
\textbf{u} &= \cut_{\{n-2, n-1\}}(\gamma)_{\{\langle n-2, 0\rangle, \langle n-1, 1 \rangle \}},  
 \\
\textbf{v} &= \cut_{\{n-2, n-1\}}(\gamma)_{\{\langle n-2, 1\rangle, \langle n-1, 1 \rangle \}}.
 &&\textbf{v'}= \cut_{\{n-2, n-1\}}(\gamma')_{\{\langle n-2, 1\rangle, \langle n-1, 1 \rangle \}}. \\
\end{align*}
Our goal is to apply Lemma \ref{lem:twokissmasterlemma}, with the algebra under consideration set to $\Delta(\theta_0, \dots, \theta_{n-3})$ and the two congruences under consideration set to 
\[
\alpha_{n-2} = \faces_{n-2}(\Delta(\{\theta_j\}_{j\in n\setminus \{n-1\}}))
\text{ and }  
\alpha_{n-1} = \faces_{n-1}(\Delta(\{\theta_j\}_{j\in n\setminus \{n-2\}})).
\] Again, the assumption we made of $\gamma$ implies that $\zeta \in \rect(\alpha_{n-2}, \alpha_{n-1})$, while Theorem \ref{thm:deltaofdeltaisdelta} implies that $\zeta' \in \Delta(\alpha_{n-2}, \alpha_{n-1})$. Having established the conditions of Lemma \ref{lem:twokissmasterlemma}, we conclude that 
\[
\epsilon=
\begin{tikzpicture}
    [ baseline=(center.base), font=\small,
      every node/.style={inner sep=0.25em}, scale=1 ]
    \node at (0.5,-0.5) (center) {\phantom{$\cdot$}}; % phantom = black magic
    \path (0,0)   node (nw) {$\textbf{u}$}
      -- ++(1.5,0)  node (ne) {$q_2(\textbf{y'}, \textbf{y}, \textbf{v'}, \textbf{v})$}
      -- ++(0,-1) node (se) {$\textbf{y}$}
      -- ++(-1.5,0) node (sw) {$\textbf{x}$};
    \draw (nw) -- (ne) -- (se) -- (sw) -- (nw);
    
    \end{tikzpicture}
\in \Delta(\alpha_{n-2}, \alpha_{n-2}).
\]

\end{proof}

It is now straightforward to check that, for $g \in 2^{n-2}$, 
\[
q_2(\textbf{y'}, \textbf{y}, \textbf{v'}, \textbf{v})_g =
q_2(\textbf{y'}_g, \textbf{y}_g, \textbf{v'}_g, \textbf{v}_g)=
\begin{cases}
\textbf{v}_g &\text{ if } g\neq \textbf{1}\\
q_n(\underbrace{\gamma_\textbf{0}, \dots, \gamma_f, \dots, \gamma_{\textbf{1}}}_{\text{colex ordering on $f \in 2^{n}$}})
&\text{ if } g  =\textbf{1}.
\end{cases}
\]
Therefore, we set $\widehat{\gamma} = \glue_{\{n-2, n-1\}}(\epsilon)$. It is similarly straightforward to check that $\widehat{\gamma}$ has the property claimed by the theorem, so the proof is finished. 

\section{An application of higher dimensional Kiss terms}\label{sec:application}
In this section we show that, for an algebra $\A$ belonging to a modular variety, the collection of all $\Delta(\theta_0, \dots, \theta_{n-1})$ for all $n\geq 2$ and $(\theta_0, \dots, \theta_{n-1}) \in \Con(\A)^n$ is completely determined by the higher commutator. We established in \cite{taylorsupnil} that this collection of relations completely determines the hypercommutator for $\A$. In view of Theorem \ref{thm:tc=hhcinmodular}, this means that the term condition higher commutator and the collection of all $\Delta$ relations completely determine each other for an algebra with Day terms. After this is established we will follow \cite{orsalrel} and conclude that any collection of clones on some set that share congruences, higher commutators, and Day terms has a greatest element. 

We begin with a lemma which, we wish to note, could be used to provide a cleaner proof of parts of Theorem 4.10 from \cite{taylorsupnil}. Recall from \cite{taylorsupnil} that for an algebra $A$, $n\geq 2$, and $x,y \in A$, the $(n)$-dimensional commutator cube for the pair $\langle x, y\rangle$ is the labeled cube belonging to $A^{2^n}$ such that the pivot vertex is labeled by $y$, while the other vertices are labeled by $x$. This cube is denoted $\com_n(x,y)$.

\begin{lem}\label{lem:deltashifting}
Let $A$ be an algebra, $n\geq 1$, and $R \subseteq A^{2^n}$ an $(n)$-dimensional congruence. Let $\gamma \in R$. If $\com_n(\gamma_\textbf{1}, q) \in R$ for some $q \in A$, then $\widehat{\gamma} \in R$, where 
\[
\widehat{\gamma}_f =
\begin{cases}
q &\text{ if } f = \textbf{1}\\
\gamma_f &\text{ otherwise}.
\end{cases}
\] 
\end{lem}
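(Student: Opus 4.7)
The plan is to prove this by first reducing $\gamma$ via iterated reflection to a cube that depends only on the $0$-coordinate, verifying the claim directly in that reduced case, and finally climbing back to the general $\gamma$ using $(n)$-transitivity in each direction $k \in \{1, \dots, n-1\}$.

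Set $\gamma_0 = \gamma$ and, for $k = 1, \dots, n-1$, recursively define $\gamma_k = \refl_k^1(\gamma_{k-1})$. By $(n)$-reflexivity each $\gamma_k$ lies in $R$, and because the pivot vertex $\textbf{1}$ has a $1$ in every coordinate, $(\gamma_k)_\textbf{1} = \gamma_\textbf{1}$ for every $k$. Setting $x = \gamma_{(0, 1, \dots, 1)}$, the cube $\gamma_{n-1}$ depends only on its $0$-coordinate: $\faces_0^0(\gamma_{n-1}) = \cube(x)$ and $\faces_0^1(\gamma_{n-1}) = \cube(\gamma_\textbf{1})$.

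To see that $\widehat{\gamma_{n-1}} \in R$, consider the direction-$0$ chain of $(n-1)$-cubes
\[
\mu_0 = \cube(x), \qquad \mu_1 = \cube(\gamma_\textbf{1}), \qquad \mu_2 = \com_{n-1}(\gamma_\textbf{1}, q).
\]
Then $\glue_{\{0\}}(\langle \mu_0, \mu_1 \rangle) = \gamma_{n-1} \in R$ and $\glue_{\{0\}}(\langle \mu_1, \mu_2 \rangle) = \com_n(\gamma_\textbf{1}, q) \in R$, so $(n)$-transitivity in direction $0$ yields $\glue_{\{0\}}(\langle \mu_0, \mu_2 \rangle) = \widehat{\gamma_{n-1}} \in R$.

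Finally, climb back by descending induction on $k$. Assuming $\widehat{\gamma_k} \in R$, take the direction-$k$ chain $\nu_0 = \faces_k^0(\gamma_{k-1})$, $\nu_1 = \faces_k^1(\gamma_{k-1})$, and $\nu_2 = \widehat{\nu_1}$. The key observation is that $\gamma_k = \refl_k^1(\gamma_{k-1})$ means $\gamma_k = \glue_{\{k\}}(\langle \nu_1, \nu_1 \rangle)$, so $\glue_{\{k\}}(\langle \nu_1, \nu_2 \rangle) = \widehat{\gamma_k}$, which lies in $R$ by assumption; and $\glue_{\{k\}}(\langle \nu_0, \nu_1 \rangle) = \gamma_{k-1} \in R$. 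By $(n)$-transitivity in direction $k$, $\glue_{\{k\}}(\langle \nu_0, \nu_2 \rangle) = \widehat{\gamma_{k-1}} \in R$. Iterating from $k = n-1$ down to $k = 1$ produces $\widehat{\gamma} = \widehat{\gamma_0} \in R$. The main subtlety is the bookkeeping needed to confirm that each bridge cube $\glue_{\{k\}}(\langle \nu_1, \nu_2 \rangle)$ really coincides with $\widehat{\gamma_k}$, which follows directly from the definition of $\refl_k^1$.
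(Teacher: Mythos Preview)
Your argument is correct, but it takes a genuinely different route from the paper's. The paper proves the lemma by induction on $n$: for the inductive step it first manufactures an auxiliary cube $\mu\in R$ (using Corollary~2.5 of \cite{taylorsupnil} and a single application of transitivity in direction $n-1$), then passes to the $(n-1)$-dimensional congruence $\lines_{n-1}(R)$ of $\A^2$ (via Lemma~2.4 of \cite{taylorsupnil}), applies the inductive hypothesis there, and glues back. You instead stay at the fixed dimension $n$ throughout: you use iterated reflexivity to collapse $\gamma$ to a cube depending only on the $0$-coordinate, handle that case with one transitivity step in direction $0$, and then climb back with one transitivity step in each of the directions $1,\dots,n-1$. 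Your approach is more elementary and fully self-contained, needing only the raw $(n)$-reflexivity and $(n)$-transitivity of $R$ rather than any outside lemmas; the paper's approach has the minor advantage of exercising the $\lines/\glue$ dimension-reduction machinery that is used repeatedly elsewhere in the article.
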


\begin{proof}
The proof proceeds by induction. The case when $n=1$ is an obvious application of the fact that a $(1)$-dimensional congruence is a transitive relation. So, suppose that $n\geq 2$ and that the lemma holds for $n-1$. Let $R \leq A^{2^n}$, $\gamma \in R$, and suppose that there is a $q \in A$ such that $\com_n(\gamma_{\textbf{1}}, q) \in R$. Suppose that the $(n-1)$-pivot line of $\gamma$ is the pair $\langle z, \gamma_\textbf{1} \rangle$. It follows from Corollary 2.5 of \cite{taylorsupnil} that $\cube_{n-1}(z, \gamma_\textbf{1}) \in R$. Because $R$ is $(n)$-transitive, we conclude that 
\begin{align*}
\mu &= \glue_{n-1}
\left(
\langle
\faces_{n-1}^0(\cube_{n-1}(z,\gamma_\textbf{1})),
\faces_{n-1}^1(\com_n(\gamma_\textbf{1},q))
\rangle
\right)\\
&= \glue_{n-1}(\langle \cube(z) , \com_{n-1}(\gamma_\textbf{1}, q) \rangle) \in R.
\end{align*}
We can describe $\mu$ more concretely, where for $f \in 2^n$
\[
\mu_f = 
\begin{cases}
z &\text{ if } f_{n-1}= 0\\
\gamma_\textbf{1} &\text{ if } f_{n-1} =1 \text{ and } f\neq \textbf{1}\\
q &\text{ if } f = \textbf{1}.
\end{cases}
\]

It follows from Lemma 2.4 of \cite{taylorsupnil} that $\lines_{n-1}(R)$ is a $(n-1)$-dimensional congruence of $\A^2$. Notice that 
$\lines_{n-1}(\mu) = \com_{n-1}(\langle z, \gamma_\textbf{1} \rangle, \langle z, q \rangle )$. So, we can apply inductive assumption to get that $ \epsilon \in \lines_{n-1}(R)$, where, for $g \in n-1$, we have that
\[
\epsilon_g=
\begin{cases}
\lines_{n-1}(\gamma)_g &\text{ if } g \neq \textbf{1}, \\
\langle z, q \rangle &\text{ if } g = \textbf{1}.
\end{cases}
\]
To finish the proof, set $\widehat{\gamma} = \glue_{\{n-1\}}(\epsilon)$.

\end{proof}

\begin{prop}\label{prop:characterizeDelta}
Let $\A$ be an algebra belonging to a modular variety $\var$. Let $n\geq 2$, $q_n$ be an $(n)$-dimensional Kiss term for $\var$, and $(\theta_0, \dots, \theta_{n-1}) \in \Con(\A)^n$. Then, $\gamma \in \Delta(\theta_0, \dots, \theta_{n-1})$ if and only if 
\begin{enumerate}
\item $\faces_i^j(\gamma) \in 
\Delta
\left( \{\theta_k\}_{k \in n\setminus \{i\}}
\right) $ for every $i \in n$ and $j \in 2$, and 
\item $\langle \gamma_\textbf{1} , q_n(\gamma_\textbf{0}, \dots, \gamma_f, \dots, \gamma_{\textbf{1}}) \rangle \in [\theta_0, \dots, \theta_{n-1}]$.
\end{enumerate}
\end{prop}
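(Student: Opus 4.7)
The plan is to use Theorem \ref{thm:higherkisscompletes} in tandem with Lemma \ref{lem:deltashifting} to pass back and forth between $\gamma$ and the modified cube $\widehat{\gamma}$ obtained by replacing $\gamma_\textbf{1}$ with $q_n(\gamma_\textbf{0}, \dots, \gamma_\textbf{1})$. Throughout I will rely on two standard properties of $\Delta$ already developed in \cite{taylorsupnil}: first, that $\faces_i^j(\Delta(\theta_0, \dots, \theta_{n-1})) = \Delta(\{\theta_k\}_{k \in n \setminus \{i\}})$ (Lemma 2.14), and second, that the $(n)$-ary higher commutator is the pivot congruence of $\Delta$, so $\langle a, b \rangle \in [\theta_0, \dots, \theta_{n-1}]$ if and only if $\com_n(a,b) \in \Delta(\theta_0, \dots, \theta_{n-1})$.

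For the forward direction, assume $\gamma \in \Delta(\theta_0, \dots, \theta_{n-1})$. Condition (1) is immediate from the face property of $\Delta$ cited above. For (2), that same face property verifies the hypothesis of Theorem \ref{thm:higherkisscompletes} for $\gamma$, so $\widehat{\gamma} \in \Delta(\theta_0, \dots, \theta_{n-1})$ as well. The pivot pair of $\gamma$ is $\langle \gamma_\textbf{0}, \gamma_\textbf{1} \rangle$ and the pivot pair of $\widehat{\gamma}$ is $\langle \gamma_\textbf{0}, q_n(\gamma_\textbf{0}, \dots, \gamma_\textbf{1}) \rangle$; both are pivot pairs of elements of $\Delta$ and therefore lie in $[\theta_0, \dots, \theta_{n-1}]$. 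Symmetry and transitivity of the commutator congruence now yield (2).

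For the reverse direction, assume (1) and (2). By (1), Theorem \ref{thm:higherkisscompletes} again produces $\widehat{\gamma} \in \Delta(\theta_0, \dots, \theta_{n-1})$. By (2) and symmetry of the commutator, $\langle q_n(\gamma_\textbf{0}, \dots, \gamma_\textbf{1}), \gamma_\textbf{1} \rangle \in [\theta_0, \dots, \theta_{n-1}]$, so the commutator cube $\com_n(\widehat{\gamma}_\textbf{1}, \gamma_\textbf{1})$ lies in $\Delta(\theta_0, \dots, \theta_{n-1})$. Apply Lemma \ref{lem:deltashifting} to $\widehat{\gamma}$ with $q = \gamma_\textbf{1}$: the conclusion is a cube in $\Delta$ whose pivot vertex is $\gamma_\textbf{1}$ and whose remaining vertices agree with those of $\widehat{\gamma}$. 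Since $\widehat{\gamma}$ agrees with $\gamma$ at every non-pivot vertex, this cube is precisely $\gamma$, so $\gamma \in \Delta(\theta_0, \dots, \theta_{n-1})$.

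The main obstacle — really just careful bookkeeping — is lining up the two encodings of the commutator: the pivot-congruence encoding that harvests (2) from a cube in $\Delta$ in the forward direction, and the commutator-cube encoding required to feed Lemma \ref{lem:deltashifting} in the reverse direction. Everything else is a direct application of the two tools at hand together with the face property of $\Delta$.
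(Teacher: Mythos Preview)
Your reverse direction is correct and matches the paper's argument exactly.

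The forward direction, however, contains a genuine error. You assert that for any $\gamma\in\Delta(\theta_0,\dots,\theta_{n-1})$ the ``pivot pair'' $\langle\gamma_\textbf{0},\gamma_\textbf{1}\rangle$ lies in $[\theta_0,\dots,\theta_{n-1}]$, and then combine the two such pairs coming from $\gamma$ and $\widehat{\gamma}$. This claim is false: already for $n=2$, the generator $\cube_0(a,b)\in\Delta(\theta_0,\theta_1)$ (with $\langle a,b\rangle\in\theta_0$) has $\gamma_\textbf{0}=a$ and $\gamma_\textbf{1}=b$, yet $\langle a,b\rangle$ need not lie in $[\theta_0,\theta_1]$. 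The characterization you cite --- $\langle a,b\rangle\in[\theta_0,\dots,\theta_{n-1}]$ iff $\com_n(a,b)\in\Delta$ --- only concerns the very special cubes $\com_n(a,b)$, not the main diagonal of an arbitrary $\gamma\in\Delta$.

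The repair is to compare $\gamma$ and $\widehat{\gamma}$ directly, exploiting that they agree at every vertex except $\textbf{1}$. Using $(n)$-symmetry and $(n)$-transitivity of $\Delta$ in the $(n-1)$-direction, glue $\faces_{n-1}^1(\gamma)$ and $\faces_{n-1}^1(\widehat{\gamma})$ to obtain
\[
\mu=\glue_{\{n-1\}}\bigl(\langle\faces_{n-1}^1(\gamma),\faces_{n-1}^1(\widehat{\gamma})\rangle\bigr)\in\Delta(\theta_0,\dots,\theta_{n-1}).
\]
Every $(n-1)$-supporting line of $\mu$ is constant, while its $(n-1)$-pivot line is exactly $\langle\gamma_\textbf{1},\widehat{\gamma}_\textbf{1}\rangle=\langle\gamma_\textbf{1},q_n(\gamma_\textbf{0},\dots,\gamma_\textbf{1})\rangle$; now Theorem~4.10 of \cite{taylorsupnil} (together with Theorem~\ref{thm:tc=hhcinmodular}) yields (2). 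This is precisely the route the paper takes.
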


\begin{proof}
Suppose that $\gamma \in \Delta(\theta_0, \dots, \theta_{n-1})$. It follows from Lemma 2.14 of \cite{taylorsupnil} that $\gamma$ satisfies \emph{(1)}, which also means that $\gamma$ is within the scope of Theorem \ref{thm:higherkisscompletes}. Let $\widehat{\gamma}$ be as in the conclusion of Theorem \ref{thm:higherkisscompletes}.
It follows from the $(n)$-symmetry and $(n)$-transitivity of $\Delta(\theta_0, \dots, \theta_{n-1})$ that 
\[\mu = \glue_{\{n-1\}}
\left(
\langle \faces_{n-1}^1(\gamma), \faces_{n-1}^1(\widehat{\gamma}) \rangle 
\right)
\in \Delta(\theta_0, \dots, \theta_{n-1}).
\]
Because the pair $\langle \gamma_\textbf{1} , q_n(\gamma_\textbf{0}, \dots, \gamma_f, \dots, \gamma_{\textbf{1}} )\rangle$ is $(n-1)$-supported by $\mu$, we may apply Theorem 4.10 of \cite{taylorsupnil} along with \ref{thm:tc=hhcinmodular} and conclude that $\langle \gamma_\textbf{1} , q_n(\gamma_\textbf{0}, \dots, \gamma_f, \dots, \gamma_{\textbf{1}} )\rangle \in [\theta_0, \dots, \theta_{n-1}]$. This establishes \emph{(2)}.

Now, suppose that \emph{(1)} and \emph{(2)} hold. We may again apply Theorem \ref{thm:higherkisscompletes} and obtain $\widehat{\gamma} \in \Delta(\theta_0, \dots, \theta_{n-1})$. Next, we use \emph{(2)} along with Theorem 4.10 of \cite{taylorsupnil} to conclude that $\com_n( q_n(\gamma_\textbf{0}, \dots, \gamma_f, \dots, \gamma_{\textbf{1}}) ,\gamma_\textbf{1} ) \in \Delta(\theta_0, \dots, \theta_{n-1})$. We apply Lemma \ref{lem:deltashifting} and conclude that $\gamma \in \Delta(\theta_0, \dots, \theta_{n-1})$.
\end{proof}

In what follows we need to distinguish between commutators computed in different algebras, provided it makes sense to do so. Let $\A$ and $\mathbb{B}$ be algebras with a common universe and suppose that $\{\theta_i\}_{i \in S} \subseteq \Con(\A)\cap \Con(\mathbb{B})$ for some $S \finsub \nat$. We denote the $\Delta(\{\theta_i\}_{i \in S})$ and $[\{\theta_i\}_{i \in S}]$ computed in either $ \A$ or $\mathbb{B}$ with a subscript, i.e.\ $\Delta_\A( \{\theta_i\}_{i \in S})$ and $[\{\theta_i\}_{i \in S}]_\A$ are these objects computed in $\A$, while $\Delta_\mathbb{B}( \{\theta_i\}_{i \in S})$ and $[\{\theta_i\}_{i \in S}]_\mathbb{B}$ are computed in $\mathbb{B}$. We make no effort to distinguish between the term condition and hypercommutator here, because each algebra we work with belongs to a modular variety. 

\begin{cor}[cf.\ Corollary 4.3 of \cite{orsalrel}]\label{cor:shareddelta}
If $\A$ and $\mathbb{B}$ are algebras that share a universe, Day terms $p_0, \dots, p_k$, and congruences $\{\theta_i\}_{i \in S}$ for some $S \finsub \nat$, then 
$[\{\theta_i\}_{i \in T}]_\A = [\{\theta_i\}_{i \in T}]_\mathbb{B}$ for all $T \subseteq S$
if and only if
$\Delta_\A( \{\theta_i\}_{i \in S}) = \Delta_\mathbb{B}( \{\theta_i\}_{i \in S})$. 
\end{cor}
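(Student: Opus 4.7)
The plan is to prove both implications by leveraging Proposition \ref{prop:characterizeDelta} for the forward direction and the relational nature of the hypercommutator (via Theorem \ref{thm:tc=hhcinmodular}) for the backward direction. A preliminary observation is that since $\A$ and $\mathbb{B}$ share the Day terms $p_0,\dots,p_k$, Kiss's explicit construction in \cite{threeremarks} yields a single $(2)$-dimensional Kiss term $q_2$ realised as the same operation on the common universe; by the recursive Definition \ref{def:higherkissterms}, the same then holds for every $q_n$ with $n \geq 2$. This is what lets the two sides of the equivalence talk about the same Proposition \ref{prop:characterizeDelta} characterization.

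For the forward implication, I proceed by induction on $|S|$. The cases $|S| \leq 1$ reduce to equality of a shared congruence. For $|S| = n \geq 2$, the inductive hypothesis applied to each proper subset $S' \subsetneq S$ (whose sub-commutators all agree by restricting the hypothesis) gives $\Delta_\A(\{\theta_j\}_{j \in S'}) = \Delta_\mathbb{B}(\{\theta_j\}_{j \in S'})$. I then apply Proposition \ref{prop:characterizeDelta}: a cube $\gamma$ belongs to $\Delta(\{\theta_i\}_{i \in S})$ precisely when (i) every face $\faces_i^j(\gamma)$ lies in the appropriate lower-dimensional $\Delta$, and (ii) $\langle \gamma_\textbf{1}, q_n(\gamma_\textbf{0},\dots,\gamma_\textbf{1}) \rangle$ lies in $[\{\theta_i\}_{i \in S}]$. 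Both (i) and (ii) transfer between $\A$ and $\mathbb{B}$ by the inductive hypothesis, the shared commutator, and the shared Kiss term, yielding $\Delta_\A(\{\theta_i\}_{i \in S}) = \Delta_\mathbb{B}(\{\theta_i\}_{i \in S})$.

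For the reverse implication, assume the top-dimensional $\Delta$'s coincide. I first show that all sub-dimensional $\Delta$'s also coincide: Lemma 2.14 of \cite{taylorsupnil}, already invoked in the proof of Theorem \ref{thm:deltaofdeltaisdelta}, expresses $\Delta(\{\theta_j\}_{j \in T})$ as the $\textbf{1}$-indexed cut of $\Delta(\{\theta_j\}_{j \in S})$ along $S \setminus T$, and since the cut operation is purely set-theoretic, identical inputs produce identical outputs. Hence $\Delta_\A(\{\theta_j\}_{j \in T}) = \Delta_\mathbb{B}(\{\theta_j\}_{j \in T})$ for every $T \subseteq S$. Next, by Theorem \ref{thm:tc=hhcinmodular} each $[\{\theta_i\}_{i \in T}]_\A$ equals the hypercommutator, which from the setup of \cite{taylorsupnil} is the least congruence $\delta$ such that $\Delta(\{\theta_i\}_{i \in T})$ has $\delta$-centrality. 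This condition refers only to the relation $\Delta$ (as a subset of a power of the common universe) and to $\delta$ ranging in the shared congruence lattice, so the least such $\delta$ coincides in $\A$ and in $\mathbb{B}$.

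The main obstacle I anticipate is the preliminary claim about the shared Kiss term, which must be traced back to Kiss's explicit construction of $q_2$ from the Day terms rather than invoked as an abstract existence result; once this is granted, the remainder of the argument is a careful bookkeeping of which data is intrinsic to the common universe (the Day terms, the congruences, the Kiss terms $q_n$, the $\Delta$ relations, and the purely combinatorial constructions $\cut$, $\glue$, $\faces$) as opposed to information about the full individual clones.
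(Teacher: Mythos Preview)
Your forward direction is correct and is exactly the paper's argument: induction on $|S|$, with Proposition~\ref{prop:characterizeDelta} supplying the two conditions that transfer between $\A$ and $\mathbb{B}$ once the lower-dimensional $\Delta$'s, the top commutator, and the shared $q_n$ are in hand. Your preliminary remark about the $q_n$ being literally the same operation is also the paper's first sentence.

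Your backward direction has a gap. You write that the hypercommutator is the least congruence $\delta$ for which $\Delta$ has $\delta$-centrality, and then assert that $\delta$ ranges over ``the shared congruence lattice.'' But the hypothesis does \emph{not} say $\Con(\A)=\Con(\mathbb{B})$; it only says the particular congruences $\{\theta_i\}_{i\in S}$ are common to both. With possibly different congruence lattices, ``least congruence satisfying a clone-free predicate'' can a priori yield different answers in $\A$ and in $\mathbb{B}$, so your sentence does not close the argument. The paper sidesteps this by citing Theorem~4.10 of \cite{taylorsupnil}, which (as you can see from its use in the proof of Proposition~\ref{prop:characterizeDelta}) characterizes membership of a pair $\langle x,y\rangle$ in the hypercommutator directly by a condition on $\Delta$ itself (via the commutator cube $\com_n(x,y)$, equivalently via being supported in $\Delta$). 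That condition is a statement about the relation $\Delta$ alone, with no quantification over congruences, so once $\Delta_\A=\Delta_\mathbb{B}$ the commutators coincide as sets without any appeal to the full congruence lattices. Replacing your ``least $\delta$'' sentence with an invocation of Theorem~4.10 (together with Lemma~2.14, which you already use correctly to propagate equality down to all $T\subseteq S$) repairs the argument and brings it in line with the paper's one-line proof of this direction.
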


\begin{proof}
Notice that, because $\A$ and $\mathbb{B}$ share a choice of Day terms, they also share a sequence $q_2, \dots, q_n, \dots $ of higher dimensional Kiss terms. For the forward implication we proceed by induction on the size of $S$. The statement is trivial if $|S| =1$. Let $n\geq 2$ and assume it holds for $|S| =n-1$. Take $\gamma \in \Delta_\A(\{\theta_i\}_{i \in S})$. Then $\gamma$ satisfies \emph{(1)} and \emph{(2)} of Proposition \ref{prop:characterizeDelta}. Applying the inductive assumption, the assumption that $[\{\theta_i\}_{i \in T}]_\A = [\{\theta_i\}_{i \in T}]_\mathbb{B}$, and the fact that these algebras share the term $q_n$ allows us to conclude that $\gamma$ satisfies \emph{(1)} and \emph{(2)}, now for the algebra $\mathbb{B}$. This shows that $\gamma \in \Delta_\mathbb{B}(\{\theta_i\}_{i \in S})$. So, $\Delta_\A(\{\theta_i\}_{i \in S}) \subseteq \Delta_\mathbb{B}(\{\theta_i\}_{i \in S})$. The other containment is proved in an identical manner. 

The other direction follows from Lemma 2.14 and Theorem 4.10 of \cite{taylorsupnil}.
\end{proof}

\begin{thm}[cf.\ Theorem 1.3 of \cite{orsalrel}]\label{thm:comparecommutators}
Let $\A$ be an algebra with Day terms $m_0, \dots, m_k$. Suppose $S \finsub \nat$ and let $\{\theta_i\}_{i \in S} \subseteq \Con(\A)$. There exists a greatest clone $\mathcal{C}$ on $A$ satisfying
\begin{enumerate}
\item $m_0, \dots, m_k \in \mathcal{C} $,
\item $\mathcal{C}$ preserves $\{\theta_i\}_{i \in S}$, and
\item $[\{\theta_i\}_{i \in T}]_\A = [\{\theta_i\}_{i \in T}]_{\langle A; \mathcal{C}\rangle}$ for every $T \subseteq S$.
\end{enumerate}
\end{thm}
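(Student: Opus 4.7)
The plan is to define $\mathcal{C}$ directly as a polymorphism clone and reduce condition (3) to a single $\Delta$-equality by means of Corollary~\ref{cor:shareddelta}. Concretely, set
\[
\mathcal{C} := \Pol\bigl(\{\theta_i\}_{i \in S} \cup \{\Delta_\A(\{\theta_i\}_{i \in S})\}\bigr),
\]
i.e.\ the clone of all operations on $A$ that preserve each congruence $\theta_i$ and preserve the $(|S|)$-dimensional relation $\Delta_\A(\{\theta_i\}_{i \in S})$, viewed as a $2^{|S|}$-ary relation on $A$. This is a clone by construction and satisfies (2) by definition. Since every term operation of $\A$ preserves every congruence of $\A$ and every higher $\Delta$-relation of $\A$, we have $\Clo(\A) \subseteq \mathcal{C}$; in particular $m_0,\dots,m_k \in \mathcal{C}$, which yields (1).

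For (3), let $\mathbb{B} = \langle A; \mathcal{C}\rangle$. Since $m_0,\dots,m_k \in \mathcal{C}$ and the Day identities are universal, $\mathbb{B}$ lies in a modular variety and shares Day terms with $\A$. By Corollary~\ref{cor:shareddelta} applied to $\A$ and $\mathbb{B}$, condition (3) is equivalent to the single statement $\Delta_\A(\{\theta_i\}_{i \in S}) = \Delta_\mathbb{B}(\{\theta_i\}_{i \in S})$. For the inclusion $\Delta_\A \subseteq \Delta_\mathbb{B}$, note that $\Clo(\A) \subseteq \mathcal{C}$, so $\Delta_\mathbb{B}$ is in particular closed under $\Clo(\A)$-operations; since the reflexivity, symmetry and transitivity properties of an $(n)$-dimensional congruence depend only on the indexed set-theoretic structure of a subset of $A^{2^n}$ (the operators $\refl_i^j$, $\sym_i$, and $\circ_i$ do not involve the algebra's basic operations), $\Delta_\mathbb{B}$ is a $(|S|)$-dimensional congruence of $\A$ containing the generators $M(\{\theta_i\}_{i\in S})$, hence contains $\Delta_\A$. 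For the reverse inclusion, $\Delta_\A$ is closed under $\mathcal{C}$-operations by the very definition of $\mathcal{C}$, so by the same intrinsic-properties argument $\Delta_\A$ is a $(|S|)$-dimensional congruence of $\mathbb{B}$ containing the generators, which forces $\Delta_\mathbb{B} \subseteq \Delta_\A$.

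For maximality, suppose $\mathcal{D}$ is any clone on $A$ satisfying (1)--(3) and set $\mathbb{D} = \langle A; \mathcal{D}\rangle$. Condition (2) gives that $\mathcal{D}$-operations preserve each $\theta_i$. Condition (3), together with Corollary~\ref{cor:shareddelta} applied to $\A$ and $\mathbb{D}$, gives $\Delta_\mathbb{D}(\{\theta_i\}_{i \in S}) = \Delta_\A(\{\theta_i\}_{i \in S})$; since every $\mathcal{D}$-operation preserves $\Delta_\mathbb{D}$, it preserves $\Delta_\A$. Hence $\mathcal{D} \subseteq \mathcal{C}$. The only conceptually delicate step in the whole plan is the two-sided comparison $\Delta_\A = \Delta_\mathbb{B}$, which amounts to the observation that $(n)$-dimensional congruence structure factors into an algebra-dependent ``subuniverse'' part and an intrinsic combinatorial part; all the substantive content is already packaged in Proposition~\ref{prop:characterizeDelta} and Corollary~\ref{cor:shareddelta}, and the present theorem is essentially their formal consequence.
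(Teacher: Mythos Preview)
Your proof is correct and follows essentially the same approach as the paper's. The only cosmetic difference is that you include the $\theta_i$ explicitly in the defining relations for $\mathcal{C}$, making (2) immediate, whereas the paper defines $\mathcal{C}$ as $\Pol(\Delta_\A(\{\theta_i\}_{i\in S}))$ alone and recovers preservation of each $\theta_i$ (and of every lower $\Delta$) via pp-definability from $\Delta_\A(\{\theta_i\}_{i\in S})$ through Lemma~2.14 of \cite{taylorsupnil}; since that pp-definability shows the two candidate clones coincide, the arguments for (3) and maximality are otherwise identical.
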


\begin{proof}
Let $\mathcal{C}$ be the collection of all polymorphisms of $\Delta_\A(\{\theta_i\}_{i \in S}$. We first show that \emph{(1)}, \emph{(2)} and \emph{(3)} hold. It is immediate that \emph{(1)} holds. It follows from Lemma 2.14 of \cite{taylorsupnil} that $\Delta_\A(\{\theta_i\}_{i \in T})$ is definable from $\Delta_\A(\{\theta_i\}_{i \in S})$ by a positive primitive formula, for every $T\subseteq S$. Therefore, $\mathcal{C}$ preserves all of these lower dimensional relations as well. In particular, \emph{(2)} holds. Because $\clo(\A) \subseteq \mathcal{C}$, we have that $\Delta_\A(\{\theta_i\}_{i \in S}) \subseteq \Delta_{\langle A; \mathcal{C}\rangle } (\{\theta_i\}_{i \in S})$. Because $\Delta_\A (\{\theta_i\}_{i \in S})$ is preserved by $\mathcal{C}$ and is a higher dimensional equivalence relation containing the generators of $ \Delta_{\langle A; \mathcal{C}\rangle } (\{\theta_i\}_{i \in S})$, we see that the two relations are equal. We apply Corollary \ref{cor:shareddelta} and conclude that \emph{(3)} holds. 

Now we show that $\mathcal{C}$ is the greatest such clone. Suppose that $\mathcal{D}$ also satisfies \emph{(1)}, \emph{(2)}, and \emph{(3)}. We apply Corollary \ref{cor:shareddelta} to the algebras $\langle A; \mathcal{C}\rangle$ and $\langle A; \mathcal{D} \rangle$ and conclude that $\Delta_{\langle A; \mathcal{C}\rangle } (\{\theta_i\}_{i \in S}) = \Delta_{\langle A; \mathcal{D}\rangle } (\{\theta_i\}_{i \in S})$. Therefore, every operation belonging to $\mathcal{D}$ preserves $\Delta_\A(\{\theta_i\}_{i \in S})$, hence $\mathcal{D} \subseteq \mathcal{C}$.
\end{proof}

\begin{cor}[cf.\ Corollary 1.4 of \cite{orsalrel}]
Let $\A$ be an algebra that has Day terms $m_0, \dots, m_k$. There exists a largest clone $\mathcal{C}$ on $A$ satisfying 
\begin{enumerate}
\item $p_0, \dots, p_k \in \mathcal{C}$ 
\item $\langle A; \mathcal{C} \rangle$ and $\A$ have the same congruences, and
\item $\langle A; \mathcal{C} \rangle$ and $\A$ have the same higher commutator operations.
\end{enumerate} 
\end{cor}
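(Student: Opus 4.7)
The plan is to globalize the proof of Theorem \ref{thm:comparecommutators} by letting the indexing family range over \emph{all} congruences of $\A$ simultaneously. Concretely, I would take $\mathcal{C}$ to be the clone of all operations on $A$ that preserve every congruence $\theta \in \Con(\A)$ and every relation $\Delta_\A(\theta_0,\dots,\theta_{n-1})$ for all $n \geq 2$ and all $(\theta_0,\dots,\theta_{n-1}) \in \Con(\A)^n$.

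Since each such $\Delta_\A(\theta_0,\dots,\theta_{n-1})$ and each $\theta \in \Con(\A)$ is a subalgebra of an appropriate power of $\A$, every term operation of $\A$ lies in $\mathcal{C}$, giving $\clo(\A) \subseteq \mathcal{C}$; in particular the Day terms are in $\mathcal{C}$, establishing (1). For (2): the inclusion $\Con(\langle A;\mathcal{C}\rangle) \subseteq \Con(\A)$ is immediate from $\clo(\A) \subseteq \mathcal{C}$, while the reverse inclusion is built directly into the definition of $\mathcal{C}$.

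For (3), I would fix $T\finsub \NN$ and $\{\theta_i\}_{i\in T} \subseteq \Con(\A)$. Exactly as in the proof of Theorem \ref{thm:comparecommutators}, I would argue that $\Delta_\A(\{\theta_i\}_{i\in T}) = \Delta_{\langle A;\mathcal{C}\rangle}(\{\theta_i\}_{i\in T})$: the generating set $M(\{\theta_i\}_{i\in T})$ depends only on the $\theta_i$ and hence is common to both algebras; the inclusion from left to right is immediate from $\clo(\A)\subseteq \mathcal{C}$, and the reverse inclusion follows because $\Delta_\A(\{\theta_i\}_{i\in T})$ is, by the very definition of $\mathcal{C}$, a higher dimensional congruence of $\langle A;\mathcal{C}\rangle$ containing the generators. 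Since $\langle A;\mathcal{C}\rangle$ belongs to a modular variety by (1), Corollary \ref{cor:shareddelta} applied to $\A$ and $\langle A;\mathcal{C}\rangle$ then yields $[\{\theta_i\}_{i\in T'}]_\A = [\{\theta_i\}_{i\in T'}]_{\langle A;\mathcal{C}\rangle}$ for every $T' \subseteq T$. Since $T$ was arbitrary, condition (3) follows.

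For maximality, let $\mathcal{D}$ be any clone satisfying (1)--(3). For each $T\finsub \NN$ and each $\{\theta_i\}_{i\in T} \subseteq \Con(\A) = \Con(\langle A;\mathcal{D}\rangle)$, the forward direction of Corollary \ref{cor:shareddelta} applied to $\A$ and $\langle A;\mathcal{D}\rangle$ yields $\Delta_\A(\{\theta_i\}_{i\in T}) = \Delta_{\langle A;\mathcal{D}\rangle}(\{\theta_i\}_{i\in T})$. Hence every operation of $\mathcal{D}$ preserves $\Delta_\A(\{\theta_i\}_{i\in T})$, giving $\mathcal{D}\subseteq \mathcal{C}$. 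The argument is essentially a repackaging of Theorem \ref{thm:comparecommutators}, so the only real subtlety is that $\Con(\A)$ may fail to be finite; but this is harmless, since $\mathcal{C}$ is cut out by a collection of finitary relational constraints and Corollary \ref{cor:shareddelta} is invoked only on finite subtuples at each step.
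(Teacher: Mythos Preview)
Your proof is correct and is essentially the same as the paper's. The paper's one-line proof takes $\mathcal{C}$ to be the intersection of all clones produced by Theorem~\ref{thm:comparecommutators} as $S$ and $\{\theta_i\}_{i\in S}$ range over all choices; since the clone from Theorem~\ref{thm:comparecommutators} for a given tuple is $\Pol\big(\Delta_\A(\{\theta_i\}_{i\in S})\big)$, that intersection is exactly your $\mathcal{C}$, and your verification of (1)--(3) and maximality is just the globalized version of the argument the paper leaves implicit.
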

\begin{proof}
The clone $\mathcal{C}$ is the intersection of all possible clones guaranteed by Theorem \ref{thm:comparecommutators}.
\end{proof}

\bibliographystyle{amsplain}   % {{{1
\bibliography{refs.bib}
\begin{center}
  \rule{0.61803\textwidth}{0.1ex}   % 1/(golden ratio)
\end{center}
\subjclass{MSC 08A40 (08A05, 08B05)}
\end{document}